\documentclass[11pt]{amsart}
\usepackage{amssymb}
\vfuzz2pt 
\hfuzz2pt 
\newtheorem{thm}{Theorem}[section]

\newtheorem{lem}[thm]{Lemma}

\newtheorem{prop}[thm]{Proposition}
\theoremstyle{definition}

\theoremstyle{remark}

\numberwithin{equation}{section}

\newcommand{\R}{\mathbb R}

\newcommand{\si}{\sigma}

\newcommand{\C}{{\mathbb C}}

\newcommand{\pa}{\partial }
\newcommand{\La}{\langle}
\newcommand{\Ra}{\rangle}

\newcommand{\Hc}{\mathcal H}
\newcommand{\Rc}{\mathcal R}

\newcommand{\sd}{{\mathbf S}^{d-1}}
\newcommand{\std}{{ \mathbf S}^{2d-1}}
\newcommand{\om}{ \omega}
\newcommand{\D}{\Delta}
\newcommand{\Dk}{\Delta_{\kappa}}
\newcommand{\dl}{\delta }
\newcommand{\ap}{\alpha}
\newcommand{\bt}{\beta }
\newcommand{\N}{\nabla }
\newcommand{\K}{\kappa }
\newcommand{\gm}{\gamma}


\begin{document}

\title[]{Mixed norm estimates for the Riesz transforms associated to Dunkl harmonic oscillators}
\author{Pradeep Boggarapu}
\author{S. Thangavelu}

\address{department of mathematics,
 Indian Institute of Science, Bangalore - 560 012, India}
\email{pradeep@math.iisc.ernet.in}
\email{veluma@math.iisc.ernet.in}

\keywords{Reflection groups, Dunkl operators, Hermite and
 generalised Hermite functions, Riesz transforms, singular
 integrals, weighted inequalities.}
\subjclass[2010] {Primary:
 42C10, 47G40, 26A33. 43A90. Secondary: 42B20, 42B35, 33C44.}
\thanks{}

\begin{abstract}
 In this paper we study weighted mixed norm estimates for Riesz
 transforms associated to Dunkl harmonic oscillators. The idea is
 to show that the required inequalities are equivalent to certain
 vector valued inequalities for operator defined in terms of
 Laguerre expansions. In certain cases the main result can be
 deduced from the corresponding result for Hermite Riesz
 transforms.

\end{abstract}


\vspace{3mm}

\maketitle

 \section{Introduction}
 Let $G$ be a Coxeter group (finite reflection group) associated to a
 root system $R$ in $\R^d, d\geq 2$. We use the notation $\La . , . \Ra $
 for the standard inner product on $\R^d $. Let $\K$ be a multiplicity
 function which is assumed to be non-negative and
 let
 $$ h_{\K}(x)= \prod _{\nu \in R_+} |\La x,\nu \Ra|^{\K(\nu )}$$
 where $R_+$ is the set of all positive roots in $R$. Let $T_j$, $j=1,2,
 \ldots , d$ be the difference-differential operators defined by
 $$ T_jf(x) = \frac{\pa f}{\pa x_j}(x) +\sum_{\nu \in R_+}\K(\nu)\nu _j
 \frac{f(x)-f(\si_{\nu}x)}{\La \nu, x \Ra}. $$
 where $\si_{\nu}$ is the reflection defined by $\nu$. The Dunkl
 Laplacian $\Dk $ is then defined to be the operator
 $$ \Dk = \sum _{j=1} ^d T_j ^2$$
 which can be explicitly calculated, see Theorem 4.4.9 in Dunkl-Xu
 \cite{DUX}. The Dunkl harmonic oscillator is then defined by
 $$H_{d,\K}=-\Dk + |x|^2 $$
 which reduces to the Hermite operator $H_d = -\D + |x|^2 $ when $\K
 =0$.\\

 Our aim in this paper is to study the $L^p $ mapping properties of
 Riesz transforms associated to the Dunkl harmonic oscillator. The
 spectral theory of the operator $H_{d,\K} $ has been developed by
 R\"{o}sler in \cite{RO}. The eigenfunctions of $H_{d,\K} $ are called the
 generalised Hermite functions and denoted by $\Phi_{\mu} ^{\K} $, $\mu
 \in \mathbb{N}^{d}. $ It has been proved that  they form an orthonormal basis for
 $L^2(\R ^d, h_{\K}^2dx ).$   In analogy with the Riesz transforms
 associated to the Hermite operator, one can define the Riesz
 transforms $R_j ^{\K}, \; R_{j}^{\K *},\; j = 1, 2, \ldots , d $
 by
 $$R_j ^{\K} = \Big (T_j +x_j\Big )H_{d,\K} ^{-\frac{1}{2}}, \; \;
 R_{j}^{\K *}=\Big (-T_j +x_j\Big )H_{d,\K}^{-\frac{1}{2}}  .$$
 Note that the operators $R_j ^{\K} $ and $ R_{j}^{\K *}$ are densely defined
 i.e., they are defined on the subspace $ V$ consisting of finite linear
 combinations of the generalised Hermite functions $\Phi_{\ap}^{\K} $.
 In the particular case of  $G=\mathbb{Z}^d_2 $ treated in \cite{NS2}
 the authors have shown that
 the $ L^2$ norm of $ (T_j+x_j)\Phi_{\ap}^{\K} $ behaves like $ (2|\alpha|+d+2\gamma)^{1/2}$ where $\gm =
 \sum_{\nu \in R^+} \K(\nu).$ Since $ \Phi_{\ap}^{\K} $ are eigenfunctions of $H_{d,\K} $ with
 eigenvalues $ (2|\alpha|+d+2\gamma) $ the operator $H_{d,\K}
 ^{-\frac{1}{2}} $ defined by spectral theorem satisfies $
 H_{d,\K}^{-\frac{1}{2}}\Phi_{\ap}^{\K} =
 (2|\alpha|+d+2\gamma)^{-1/2}\Phi_{\ap}^{\K}.$  From these two
 facts, it is clear that the Riesz transforms defined on $ V $
 satisfy the inequalities
 $$\|R_j ^{\K}f\|_2 \leq C \|f\|_2, \; \; \; \; \| R_{j}^{\K *}f\|_2 \leq C \|f\|_2 $$
 for all $ f \in V .$ Consequently, they extend to $ L^2 $ as bounded linear
 operators.  In \cite{A} a very cute argument based on the fact that
 $$ H_{d,\K} = \frac{1}{2} \sum_{j=1}^d\left ((T_j+x_j) (-T_j +x_j)+ (-T_j+x_j)(T_j+x_j)\right ) $$
 is used to show that the $ L^2 $ boundedness on $ V $ holds for any reflection group $G.$
 We make use of these definitions and results in the sequel.\\

 If it can be  shown that $ R_j ^{\K}$ and $ R_{j}^{\K *}$ satisfy the inequalities
 $$\|R_j ^{\K}f\|_p \leq C \|f\|_p, \; \; \; \; \| R_{j}^{\K *}f\|_p \leq C \|f\|_p $$
 for any $1<p<\infty $ whenever $f \in V $ then by density  arguments
 they can be extended to the whole of $L^p(\R^d, h_{\K}^2 dx) $,
 $1<p<\infty $ as bounded operators. This was proved in \cite{NS2} by
 Nowak and Stempak  in the particular
 case when $G=\mathbb{Z}^d_2 $. For general Coxeter groups the
 boundedness properties of the Riesz transforms are proved by Amri in
  \cite{A}. We refer to these two papers for details and further information
  on Riesz transforms associated to the Dunkl harmonic oscillator.
 Weighted norm inequalities or mixed norm inequalities are not known for these Riesz transforms.
 In this paper our main goal is to establish certain weighted mixed norm estimates for
 these operators.\\

 For $\ap \geq - \frac{1}{2} $, let $A_p ^{\ap}(\R^+) $ be the
 Muckenhoupt's class of $A_p $-weights on $\R^+ $ associated to the
 doubling measure $d\mu _{\ap}(t)= t^{2\ap + 1}dt $. Let  $d\si $ be
 the surface measure on unit sphere $\sd$ and let $w$ be a positive function on
 $\R^+$. We denote by $L^{p,2}(\R^d, w(r)r^{d+2\gm-1}
 h_{\K}^2(\om)d\si(\om)dr)$ the space of all
measurable  functions $ f $ on $ \R^d $ for which
 $$ \int_0^{\infty} \Big ( \int_{\sd}|f(r\om)|^2 h_{\K}^2(\om)d\si(\om)
 \Big )^{\frac{p}{2}}w(r)r^{d+2\gm-1}dr < \infty .$$ The $ p-$th root of the above
 quantity is a norm  with respect to which the space becomes a Banach space.
For $ 1 < p <\infty $ the dual of the Banach space $ L^{p,2}(\R^d, w(r)r^{d+2\gm-1}
 h_{\K}^2(\om)d\si(\om)dr)$  is nothing but the space $L^{p',2}(\R^d, w(r)r^{d+2\gm-1}
 h_{\K}^2(\om)d\si(\om)dr)$ where $ p' $ is the index conjugate to $ p.$ This follows
 from a  general theorem proved in \cite{CB} since we can think of the space
 $ L^{p,2}(\R^d, w(r)r^{d+2\gm-1} h_{\K}^2(\om)d\si(\om)dr)$ as an $ L^p $
 space on $ \R^+ $ of functions taking values in the Hilbert space
 $ L^2(\sd, h_\K^2(\omega)d\sigma(\omega))$ taken with respect to the
 measure $ w(r) r^{d+2\gamma-1} dr.$ 
Since $ L^2(\sd, h_\K^2(\omega)d\sigma(\omega))$ is a separable Hilbert space, it can be identified with the sequence space $ l^2(\mathbb{N}) $ and hence a simple independent proof also can be given for the fact about the dual. We  denote by
 $ L^{p,2}_G(\R^d, w(r)r^{d+2\gm-1} h_{\K}^2(\om)d\si(\om)dr)$
 the subspace of $ G$-invariant functions in $ L^{p,2}(\R^d, w(r)r^{d+2\gm-1}
 h_{\K}^2(\om)d\si(\om)dr).$\\

Let $ V_G $ stand for the set of all $G$-invariant functions in $ V. $
To see that $ V_G $ is a nontrivial subspace of $ V $ we proceed as follows.
Given a function $ f $ on $ \R^d $ we define the G-invariant function $ f^\# $
 by averaging over $ G.$ Thus
 $$ f^\#(x)=\frac{1}{|G|}\sum_{g\in G}f(gx) $$
 where $|G| $ stands for the cardinality of $G $.
 We claim that $V_G$ is precisely  the set of all $ f^\# $ where $ f $ runs through $ V.$
 Indeed, it is obvious that for any $G$-invariant $f \in V$ we have  $ f = f^\# $. On the other hand,
 if $f \in V $ then $f^\#$ is $G$-invariant and $f^\#$ belongs to $V$.
 The latter can be easily seen as follows: Since $f \in V$, it is of the form
 $$f(x)= \sum_{\ap \in F}c_{\ap}\Phi_{\ap}^{\K}(x),$$
 where $F$ is a finite subset of $\mathbb{N}^d$.  Since  $H_{d,\K}$ is
 $G$-invariant and $H_{d,\K}\Phi_{\ap}^{\K}=(2|\ap | + d + 2 \gm)\Phi_{\ap}^{\K}$ (see section 2.2 below)
 it follows that $$H_{d,\K}(\Phi_{\ap}^{\K})^\#=(2|\ap | + d + 2 \gm)(\Phi_{\ap}^{\K})^\# .$$
 Note that $ H_{d,\K} $ is a self-adjoint operator with discrete spectrum. Moreover,
 each eigenspace is finite dimensional and $ \{ \Phi_\alpha^\K : \alpha \in \mathbb{N}^d \} $
 is an orthonormal basis for $ L^2(\R^d, h_\K^2 dx) $ consisting of eigenfunctions of $ H_{d,\K}.$
 Consequently, $(\Phi_{\ap}^{\K})^\# $ which is an eigenfunction of $ H_{d,\K} $
 can be written as  $ \sum_{|\bt|= |\ap|}a_{\bt}\Phi_{\bt}^{\K}$. This shows that
 $f^\# = \sum_{\alpha \in F} c_\alpha (\Phi_\alpha^\K)^\# $ belongs to $ V $. This proves our claim.\\

 Also note that the density of $V$ in
 $L^{p,2}(\R^d, w(r)r^{d+2\gm-1}h^2_{\K}(\om)d\si(\om)dr)$ implies the density of $V_G $
 in $L^{p,2}_G(\R^d, w(r)r^{d+2\gm-1} h^2_{\K}(\om)d\si(\om)dr)$ which is an
 immediate consequence of Minkowski's inequality since  the
 measures given by $h^2_{\K}(\om) d\si(\om)$ and $w(r)r^{d+2\gm-1}dr$ are $G $- invariant.
 In Subsection 2.4 we will show that $ V $ is dense in
 $L^{p,2}(\R^d, w(r)r^{d+2\gm-1}h^2_{\K}(\om)d\si(\om)dr)$ for all
 $ w  \in A_p^{\frac{d}{2}+ \gm-1}(\R^+), 1 < p < \infty. $
 Thus, $ R_j ^{\K}$ and $ R_{j}^{\K *}$ are well
 defined on  the dense subspace $V_G$.
 \begin{thm}
 Let $d\geq 2,\; 1<p<\infty $. Then for $j=1,2,\cdots , d $
  the Riesz transforms $R_j^{\K} $ and $ R_{j}^{\K *}$ initially
  defined on $V_G$ satisfy the estimates
  \begin{eqnarray*}
& &\int_0^{\infty}\Big ( \int_{\sd} |R_j^{\K}f(r\om
)|^2h_{\K}^2(\om)d\si (\om )\Big )^{\frac{p}{2}}w(r)r^{d+2\gm-1}dr  \\
& & \leq C_j(w,p,\K) \int_0^{\infty}\Big ( \int_{\sd} |f(r\om
)|^2h_{\K}^2(\om)d\si  (\om )\Big
)^{\frac{p}{2}}w(r)r^{d+2\gm-1}dr
\end{eqnarray*}
for all $f \in V_G $, $w \in A_p^{\frac{d}{2}+ \gm-1}(\R^+) $. Consequently
  $R_j^{\K} $ and $ R_{j}^{\K *}$ can be extended as a bounded
  linear operators from $L^{p,2}_G(\R^d, w(r)r^{d+2\gm-1}h^2_{\K}(\om)d\si(\om)dr)$
  into $L^{p,2}(\R^d, w(r)r^{d+2\gm-1}h^2_{\K}(\om)d\si(\om)dr) $.
 \end{thm}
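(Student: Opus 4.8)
The plan is to use the $G$-invariance to strip off the spherical variable by an $h$-harmonic expansion, thereby turning the asserted mixed norm estimate into a single vector-valued, $A_p$-weighted inequality for a family of one-dimensional Laguerre Riesz transforms on $\R^+$, and then to establish that vector-valued inequality (which, in favourable cases, can be imported from the Hermite theory). The first observation is that on $V_G$ the operators simplify drastically: if $g$ is $G$-invariant then $g(x)=g(\si_\nu x)$ for every $\nu\in R_+$, so the reflection terms in $T_j$ vanish and $T_jg=\pa g/\pa x_j$. Since $H_{d,\K}^{-1/2}$ preserves $G$-invariance, for $f\in V_G$ we have $R_j^\K f=(\pa_{x_j}+x_j)H_{d,\K}^{-1/2}f$ and $R_j^{\K *}f=(-\pa_{x_j}+x_j)H_{d,\K}^{-1/2}f$, so it suffices to bound these two operators.

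Next, write $x=r\om$ with $r=|x|$ and use the orthogonal decomposition $L^2(\sd,h_\K^2\,d\si)=\bigoplus_{m\ge0}\Hc_m$, where $\Hc_m$ is the space of $h$-harmonics of degree $m$; fix an orthonormal basis $\{Y_{m,\ell}\}_\ell$ of each $\Hc_m$. Every $f\in V_G$ is a finite sum $f(r\om)=\sum_{m,\ell}f_{m,\ell}(r)Y_{m,\ell}(\om)$ with $G$-invariant $Y_{m,\ell}$, and $\int_{\sd}|f(r\om)|^2h_\K^2\,d\si=\sum_{m,\ell}|f_{m,\ell}(r)|^2$, with the corresponding identity for $R_j^\K f$ expanded in the full $h$-harmonic basis. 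Hence both sides of the claimed inequality are $p$-th powers of norms in $L^p\big(\R^+,\,w(r)r^{d+2\gm-1}dr;\,\ell^2\big)$, and the inequality asserts exactly that the matrix of operators representing $R_j^\K$ in these bases is bounded on that space; note $w\in A_p^{d/2+\gm-1}(\R^+)$ is precisely the Muckenhoupt condition for the radial measure $r^{d+2\gm-1}dr=d\mu_{d/2+\gm-1}$.

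Then compute the action of $\pa_{x_j}+x_j$ on a single summand. With $\al_m:=m+\gm+\tfrac d2-1$, the functions $\varphi_n^{(m)}(r):=c_{n,m}\,r^m L_n^{\al_m}(r^2)e^{-r^2/2}$ form, for \emph{each} $m$, an orthonormal system in $L^2(\R^+,r^{d+2\gm-1}dr)$, and $\varphi_n^{(m)}(r)Y_{m,\ell}(\om)$ is, up to normalisation, a generalised Hermite function with eigenvalue $4n+2\al_m+2$. Using $(L_n^{\al})'=-L_{n-1}^{\al+1}$, the identity $L_n^{\al}(s)-\tfrac s\al L_{n-1}^{\al+1}(s)=\tfrac{n+\al}{\al}L_n^{\al-1}(s)$, the fact that $T_j$ maps $\Hc_m$ into $\Hc_{m-1}$ with $T_j=\pa_{x_j}$ on $G$-invariant polynomials, and the decomposition of $x_j\mathbb Y$ (for a solid $h$-harmonic $\mathbb Y$ of degree $m$) into an $h$-harmonic of degree $m+1$ plus $\tfrac{|x|^2}{2\al_m}T_j\mathbb Y$, one finds that $(\pa_{x_j}+x_j)\big(\varphi_n^{(m)}(r)Y_{m,\ell}(\om)\big)$ is a bounded multiple of $\sqrt n\,\varphi_{n-1}^{(m+1)}(r)$ times a degree-$(m+1)$ $h$-harmonic plus a bounded multiple of $\varphi_n^{(m-1)}(r)$ times a degree-$(m-1)$ $h$-harmonic; all the systems $\{\varphi_n^{(m)}\}$ sit in the same radial space $L^2(r^{d+2\gm-1}dr)$. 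Composing with $H_{d,\K}^{-1/2}$ inserts the factor $(4n+2\al_m+2)^{-1/2}$, which absorbs the $\sqrt n$. Reading this across the bases, $R_j^\K$ and $R_j^{\K *}$ become, on $V_G$, matrices of operators of the form (uniformly bounded angular pairing coefficients of $h$-harmonics)\,$\circ$\,$(\delta_m^{\pm}\mathcal L_m^{-1/2})$, where $\mathcal L_m$ is the Laguerre-type operator with eigenfunctions $\varphi_n^{(m)}$ and $\delta_m^{\pm}$ are first-order difference operators shifting $m$ by $\pm1$; these are exactly the one-dimensional Laguerre Riesz transforms whose weighted $L^p$ theory is available from the work of Nowak and Stempak and of Thangavelu.

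Finally, assemble. Orthogonality of $h$-harmonics of different degrees on $\sd$ dominates, for each fixed $r$, the spherical $L^2$-norm of $R_j^\K f(r\cdot)$ by the $\ell^2$-norm over $(m,\ell)$ of the outputs of the scalar operators $\delta_m^{\pm}\mathcal L_m^{-1/2}$ applied to the $f_{m,\ell}$, the angular pairings being $\ell^2\to\ell^2$ contractions; integrating against $w(r)r^{d+2\gm-1}dr$ then reduces the theorem to one vector-valued statement: the operators $\delta_m^{\pm}\mathcal L_m^{-1/2}$, being Calder\'on--Zygmund operators with $\ell^2$-valued kernels, are bounded on $L^p(w\,d\mu_{d/2+\gm-1};\ell^2)$ with bounds uniform in the spherical degree $m$, so that the sum over $m$ converges. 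Here one also uses that $w\in A_p^{d/2+\gm-1}(\R^+)$ belongs to $A_p^{\al}(\R^+)$ for every $\al\ge d/2+\gm-1$, so a single weight governs all levels, and the density of $V_G$ in $L^{p,2}_G(\R^d,w(r)r^{d+2\gm-1}h_\K^2(\om)d\si(\om)dr)$ (established in Subsection 2.4) gives the stated extension. When $d+2\gm$ is a positive integer---in particular when $\K=0$, and when $G=\Z_2^d$ with integer multiplicities---the $\varphi_n^{(m)}$ are radial parts of Hermite functions on $\R^{\,d+2\gm}$, and the required vector-valued inequality follows from known mixed norm estimates for the Hermite Riesz transforms; for general $\K$ it must be proved directly. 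I expect the genuine difficulty to be exactly this last step---the uniform-in-$m$, $\ell^2$-valued, $A_p$-weighted bound for the Laguerre Riesz transforms---since it calls for estimates on the Laguerre kernels with explicit control of their dependence on the parameter $\al$, sharp enough to run a vector-valued Calder\'on--Zygmund argument with arbitrary Muckenhoupt weights.
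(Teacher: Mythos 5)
Your overall reduction---use $G$-invariance so that $T_j=\partial_j$ on $V_G$, expand in $G$-invariant h-harmonics, and convert the mixed norm bound for $R_j^{\K}$ into an $\ell^2$-valued, $A_p^{d/2+\gm-1}(\R^+)$-weighted inequality for Laguerre Riesz transforms of type $d/2+\gm+m-1$---is essentially the strategy of the paper (Proposition 2.1, Propositions 3.1--3.3 and Subsection 3.4), with the Hermite deduction reserved for the case $2\gm\in\mathbb{Z}$ (Section 4). But your argument stops exactly where the real work begins: the assertion that the level-$m$ operators, conjugated by $r^{\pm m}$, are bounded on $L^p(w\,d\mu_{d/2+\gm-1};\ell^2)$ \emph{uniformly in} $m$ is first stated as if it followed from a vector-valued Calder\'on--Zygmund argument and then, in your final sentence, conceded to be unproved for general $\K$. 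That uniform vector-valued inequality is precisely the pair of estimates the paper records as Theorems 3.4 and 3.5 (weighted bounds for $r^mR^{\dl+m}(r^{-m}f_m)$ and for $m\,r^{m-1}L_{\dl+m}^{-1/2}(r^{-m}f_m)$ with a single weight $w\in A_p^{\dl}(\R^+)$, $\dl=d/2+\gm-1$), and it is the substance of Theorem 1.1: the paper imports it from Ciaurri and Roncal \cite{CR} for general $\gm$, and proves it independently only when $2\gm$ is an integer, via the weighted mixed norm inequality for the Hermite Riesz transforms obtained by the Herz--Rivi\`ere/Rubio de Francia transference argument of Section 4. Scalar boundedness of each $R^{\dl}$ (Nowak--Stempak) does not give this. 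As written, your proposal establishes only the reduction, not the theorem.

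Two points inside the reduction also need more care than you give them. First, the claim that the ``angular pairing coefficients'' are $\ell^2\to\ell^2$ contractions glosses over the factor of order $m$ produced by the spherical part of the gradient: $\frac1r\N_0^{\K}$ acting on degree-$m$ h-harmonics has norm comparable to $m$ (in the paper this is the $A_2(r)$ term of Proposition 3.3, with $\lambda_d(m,\gm)=m(m+\lambda_{\K})\sim m^2$), and it is exactly to absorb this growth that the second inequality (Theorem 3.5), with the factor $m^2r^{2m-2}$ in front of $L_{\dl+m}^{-1/2}$, is required; moreover the clean two-term splitting uses the orthogonality relations for $\N_0^{\K}Y_{m,j}$ on \emph{$G$-invariant} h-harmonics (Propositions 3.1--3.2), which fail for arbitrary orthonormal h-harmonics of the same degree. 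Second, your remark that $w\in A_p^{d/2+\gm-1}(\R^+)$ belongs to $A_p^{\al}(\R^+)$ for every $\al\ge d/2+\gm-1$ is neither justified for general weights nor needed: in the correct formulation the measure stays $d\mu_{d/2+\gm-1}$ throughout, the level-$m$ operators being conjugated by $r^{\pm m}$ precisely so that one weight class governs all levels.
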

 The proof of this theorem is based on the fact that on radial
 functions the Dunkl harmonic oscillator $H_{d,\K} $ coincides
 with the Hermite operator $H_{d+2\gm} $, when $2\gm$ is an
 integer. More generally, using an analogue of Funk-Hecke formula
 for h-harmonics we can show that the mixed norm estimates
 for the Riesz transforms $R_j^{\K} $ are equivalent to a
 vector valued inequality for a sequence of Laguerre Riesz
 transforms. When $2\gm $ is an integer these inequalities can be
 deduced from the weighted norm inequalities satisfied by Hermite
 Riesz transforms. In the general case  when $2\gm $ is not an  integer, we
 can appeal to a recent result of Ciaurri and Roncal \cite{CR}.\\

 The plan of the paper is as follows. In Section 2 we collect some
 facts from the spectral theory of Dunkl harmonic oscillators.
 Especially, we need an analogue of Mehler's formula for the
 generalised Hermite functions. We also collect some basic facts
 about h-harmonics which are analogues of spherical harmonics on $\sd $.
 The most important result is an analogue of Funk-Hecke formula
 for h-harmonics. In Section 3 we consider the vector
 $\mathcal{R}^{\K}f = (R_1^{\K},\cdots , R_d^{\K}) $ of Riesz
 transforms and show that mixed norm inequalities for
 $|\mathcal{R}^{\K}f|=\Big (\sum_{j=1}^d |R_j^{\K}f|^2 \Big )^{\frac{1}{2}} $
 can be reduced to vector valued inequalities for operators
 related to Laguerre expansions. In Section 4 we prove the
 required inequalities by considering the vector of Hermite
 Riesz transforms.\\

 Though we have considered only the Riesz transforms in this paper, we can
 also treat multipliers (e.g. Bochner-Riesz means) for the Dunkl harmonic
 oscillator. Using the known results for the Hermite operator, we can prove an
 analogue of Theorem 1.1 for multipliers associated to Dunkl harmonic
 oscillator.
 \section{Preliminaries}
 \subsection{Coxeter groups and Dunkl operators:} We assume that the
 reader is familiar with the notion of finite reflection groups
 associated to root systems. Given a root system $R $  we define
 the reflection $ \si_{\nu}$, $\nu \in R $ by
 $$\si_{\nu}x= x - 2 \; \frac{\La \nu, x \Ra}{|\nu |^2} \nu .$$
 Recall that $\La \nu, x \Ra $ is the inner product on $\R^d $. These reflections
 $\si_{\nu} $, $\nu \in R $ generate a finite group which is called
 a Coxeter group. A function $\K $ defined on $R$ is called a
 multiplicity function if it is $G$ invariant. We assume that
 our multiplicity function $\K $ is non negative.
 The Dunkl operators $T_j$ defined by
 $$T_jf(x) = \frac{\pa }{\pa x_j}f(x) +\sum_{\nu \in R_+}\K(\nu)\nu _j
 \frac{f(x)-f( \si_{\nu}x)}{\La \nu, x \Ra}.$$
 form a commuting family of operators. There exists a kernel $E_{\K}(x,\; \xi) $
 which is a joint eigenfunction for all $T_j $:
 $$T_jE_{\K}(x,\; \xi)=\xi _j E_{\K}(x,\; \xi). $$
 This is the analogue of the exponential $e ^{\La x,\; \xi \Ra} $ and Dunkl transform
 is defined in terms of $E_{\K}(ix,\; \xi) $. For all these facts we refer to Dunkl
 \cite{DU} and Dunkl-Xu \cite{DUX}. The weight function associated to $R $ and $\K $
 is defined by
 $$h^2_{\K}(x)=\prod_{\nu \in R_+}|\La x, \nu \Ra |^{2\K(\nu)}. $$
 Recall that $\gm = \sum_{\nu \in R_+}\K(\nu) $ and the multiplicity function $\K(\nu) $
 is always assumed to be non-negative.
 We consider $L^p $ spaces defined with respect to the measure $h^2_{\K}(x)dx $.
 Note that $h^2_{\K}(x) $ is homogeneous of degree $2\gm $.\\

 \subsection{Generalised Hermite functions: } In \cite{RO} R\"{o}sler has
 studied generalised Hermite polynomials associated to Coxeter
 groups. She has shown that there exists an orthonormal basis
 $\Phi_{\ap}^{\K}, \; \ap \in \mathbb{N}^d$ for $L^2(\R^d, h^2_{\K}(x)dx)$
 consisting of functions for which $\Phi_{\ap}^{\K}(x)e^{\frac{1}{2}|x|^2}$
 are polynomials. Moreover, they are eigenfunctions of the Dunkl harmonic
 oscillator:

 $$\Big ( -\Dk + |x|^2 \Big )\Phi_{\ap}^{\K}= (2|\ap | + d + 2 \gm)\Phi_{\ap}^{\K} .$$
 They are also eigenfunctions of the Dunkl transform. For our
 purpose, the most important result is the generating function
 identity or the Mehler's formula for the generalised Hermite
 functions. For $0<r<1 $, one has
 $$\sum_{\ap \in \mathbb{N}^d}\Phi_{\ap }^{\K}(x)\Phi_{\ap}^{\K}(y)r^{|\ap |}
  = c_d(1-r^2)^{-\frac{d}{2}-\gm}e^{-\frac{1}{2}\Big (\frac{1+r^2}{1-r^2}\Big )
  (|x|^2+|y|^2)}E_{\K}\Big (\frac{2rx}{1-r^2}, \; y \Big )  $$
 see Theorem 3.12 in \cite{RO}. By taking $r=e^{-2t} $, $t>0 $ we see that the kernel
 of the heat semigroup generated by $-\Dk +|x|^2 $ is given by
 \begin{equation}\label{A}
   K_t (x,y)= c_{d,\gm} (\sinh 2t )^{-\frac{d}{2}-\gm}e^{-\frac{1}{2}(\coth 2t)
   (|x|^2+|y|^2)}E_{\K}\Big (\frac{x}{ \sinh 2t},\; y \Big ) .
 \end{equation}
We will make use of this kernel in the study of Riesz
 transforms.\\

 Recall that the subspace $V$ defined in the introduction is the
 algebraic span of the generalised Hermite functions $\Phi_{\ap }^{\K}$,
 $\ap \in \mathbb{N}^d$. As every $\Phi_{\ap }^{\K}$ is a Schwartz
 function it follows that elements of $V$ are also of Schwartz class.
 It is known that $V$ is dense in $L^p(\R ^d, h_{\K}^2(x)dx)$,
 $1 \leq p < \infty $. Indeed, in \cite{TX} the authors have shown that
 Bochner-Riesz means $S_R^{\delta}f$, for large enough $\delta $,
 converge to $f$ in the norm as $R\rightarrow \infty $ as long as
 $1 \leq p < \infty $. Since $S_R^{\delta}f \in V $ for any
 $f \in L^p(\R ^d, h_{\K}^2(x)dx)$ it follows that $V$ is dense in
 $L^p(\R ^d, h_{\K}^2(x)dx)$. The same thing can be proved using the
 fact that the heat semigroup, $e^{-tH_{d,\K}}$ generated by $H_{d,\K}$
 is strongly continuous in each of $L^p(\R ^d, h_{\K}^2(x)dx)$, $1 \leq p < \infty $.
 We also need to know the density of $V$ in certain weighted $L^p $
 spaces. This will be addressed in subsection 2.4 below.

 \subsection{h-harmonics and Funk-Hecke formula:} The best
 reference for this section is Chapter 5 of \cite{DUX}.
 For the space $L^2(\sd, h^2_{\K}(\om)d\si(\om)) $
 there exists an orthonormal basis consisting of h-harmonics.
 These are analogues of spherical harmonics and defined using $\Dk $
 in place $ \D $. A homogeneous polynomial $P(x) $ is said to be a solid
 h-harmonic if $\Dk P(x)=0 $. Restrictions of such solid harmonics to $\sd$ are
 called spherical h-harmonics. The space $L^2(\sd, h^2_{\K}d\si ) $ is the orthogonal
 direct sum of the finite dimensional spaces $\Hc_{m}^d $ consisting of
 h-harmonics of degree $m $. We can choose an orthonormal basis
 $Y_{m,j}^h $, $j=1,2,\ldots ,d(m) $, $d(m)=dim (\Hc_{m}^h) $ so that the collection
 $\{Y_{m,j}^h: \; j=1,2,\ldots ,d(m),\; m = 0,1,2,\ldots  \} $ is an orthonormal basis
 for $L^2(\sd, h^2_{\K}d\si )  $.\\

 In order to state the Funk-Hecke formula we need to recall the
 intertwining operator. It has been proved that there is an
 operator $V $ satisfying $T_jV=V \frac{\pa}{\pa x_j} $. The explicit
 form of $V$ is not known, except in a couple of simple cases,
 but it is a useful operator. In particular, the Dunkl kernel
 is given by $E_{\K}(x,\;\xi) = V e^{\La \cdot ,\; \xi \Ra}(x) $.
 The operator $V $ also intertwines h-harmonics
 (see Proposition 5.2.8 of \cite{DUX}).\\

 The classical Funk-Hecke formula for spherical harmonics states
 the following. For any continuous function $f $ on $[-1,\; 1] $ and a
 spherical harmonic $Y_m $ of degree $m $, one has the formula
 $$\int_{\sd} f(\La x',\; y'\Ra )Y_m(y')d\si (y')= \lambda_m (f)Y_m(x') $$
 where $\lambda_m (f) $ is a constant defined by
 $$\lambda_m (f)= \frac{B(\frac{d-1}{2},\frac{1}{2})}{C_m^{\frac{d}{2}-1}(1)}\int_{-1}^1
 f(t)C_m^{\frac{d}{2}-1}(t)(1-t^2)^{\frac{d-3}{2}}dt .$$
 Here $C_m^{\lambda} $ stand for ultraspherical polynomials of type
 $\lambda $ and $B(r, s)$ stands for the beta function. A similar formula is true for h-harmonics (see Theorem 5.3.4 in
 \cite{DUX});
 $$\int _{\sd}Vf(x',\; \cdot )(y')Y_m^{h}(y')h^2_{\K}(y')d\si(y')= \lambda _m(f)Y_m^{h}(x') $$
 where
 $$\lambda_m (f)= \frac{B(\frac{d-1}{2}+\gm,\frac{1}{2})}{C_m^{\frac{d}{2}-1+\gm}(1)}\int_{-1}^1
 f(t)C_m^{\frac{d}{2}-1+\gm}(t)(1-t^2)^{\frac{d-3}{2}+\gm}dt .$$
 Let $J_\dl(z)$ stand for Bessel function of type $\dl >-1 $ and define
 $\mathcal{I}_\dl(z) = e^{-i\frac{\pi}{2}\dl}J_\dl (iz)$.
 If we take $ f(t)=e^{itz} $ in the above we get
 $$\frac{B(\frac{d-1}{2}+\gm,\frac{1}{2})}{C_m^{\frac{d}{2}-1+\gm}(1)}\int_{-1}^1 e^{itz}
 C_m^{\frac{d}{2}-1+\gm}(t)(1-t^2)^{\frac{d-3}{2}+\gm}dt
 = c_{d,\gm} \frac{J_{\frac{d}{2}+\gm+m-1}(z)}{z^{\frac{d}{2}+\gm-1}} $$
 (see page 204-205 in \cite{AAR}). By taking $f(t)=e^{t|x|\;|y|} $ and
 making use of the above formula we get
 $$\int_{\sd}E_{\K}(x,\; y)Y_m^{h}(y')h^2_{\K}(y')d\si (y')=c_{d,\gm}
 \frac{\mathcal{I}_{\frac{d}{2}+\gm+m-1}( |x|\; |y|)}{(|x|\; |y|)^{\frac{d}{2}+\gm-1}} Y^h_m(x').$$
 In view of this and \eqref{A} we have
 \begin{eqnarray}\label{E}
  & &  \int_{\sd}K_t(rx', sy')Y_m^{h}(y')h^2_{\K}(y')d\si (y')\\  & = &c_{d,\gm }
 (\sinh 2t )^{-1}e^{-\frac{1}{2}(\coth 2t)(r^2+s^2)} \notag
  \frac{\mathcal{I}_{\frac{d}{2}+\gm+m-1}(\frac{rs}{\sinh 2t})}{(rs)^{\frac{d}{2}+\gm-1}} Y^h_m(x').
 \end{eqnarray}
 We will make use of this formula in calculating the action of $e^{-tH_{d,\K}}$
 on functions of the form $g(r)Y_m^h(x').$

\subsection{ The density of $ V$:}  In this subsection we take up the issue of
proving the density of $ V, $ defined in the introduction, in the
weighted mixed norm spaces $L^{p,2}(\R^d,
w(r)r^{d+2\gm-1}h^2_{\K}(\om)d\si(\om)dr)$ for $ 1 < p < \infty $
and $ w \in A_p^{d/2+\gamma-1}(\R^+).$ In order to do this we will
make use of the Laguerre connection. For each $\dl \geq -
\frac{1}{2} $ we consider the Laguerre differential operator
 $$L_\dl = - \frac{d ^2}{dr^2} + r^2 - \frac{2\dl +1}{r} \frac{d}{dr} $$
 whose normalised eigenfunctions are given by
 $$\psi_k ^\dl (r) = \Big ( \frac{2\Gamma(k+1)}{\Gamma (k+\dl + 1)}
 \Big )^{\frac{1}{2}} L _k ^\dl (r^2)e^{-\frac{1}{2}r^2}$$
 where $ L _k ^\dl (r)$ are Laguerre polynomials of type $\dl$. These functions
 form an orthonormal basis for $L^2(\R^+, d\mu _\dl ) $, where
 $d\mu_\dl (r)=r^{2\dl+1}dr $. The operator $L_\dl $ generates the
 semigroup $T_t ^\dl = e^{-tL_\dl} $ whose kernel is given by
 \begin{eqnarray}\label{C}
 K_t ^\dl (r,s) = \sum_{k=0}^\infty e^{-(4k+2\dl+2)t}\psi_k
 ^\dl (r)\psi_k ^\dl (s).
 \end{eqnarray}
 The generating function identity ((1.1.47) in \cite{ST})
 for Laguerre
 functions gives the explicit expression
 \begin{eqnarray}\label{D}
 K_t ^\dl (r,s)= (\sinh 2t)^{-1} e^{- \frac{1}{2} (\coth 2t)(r^2+s^2)}(rs)^{- \dl}
 \mathcal{I}_\dl \Big (\frac{rs}{\sinh 2t} \Big )
 \end{eqnarray}
 where $\mathcal{I}_\dl(z) = e^{-i\frac{\pi}{2}\dl}J_\dl (iz)$ is the modified Bessel function.\\

 The Dunkl-Hermite semigroup $e^{-tH_{d,\K}}$ generated by the operator $H_{d,\K}$ is
 an integral operator given by
 $$e^{-tH_{d,\K}}f(x) = \int_{\R^d}f(y)K_t(x, y)h_{\K}^2(y)dy $$
 where $K_t(x, y)$ is the kernel defined in \eqref{A}. The relation between this
 semigroup and the Laguerre semigroups $T_t^{\dl}=e^{-tL_\dl}$ is given by the
 following proposition. In what follows, $Y_{m,j}^h$, $j=1,2,\cdots,d(m)$,
 $m=0,1,2,\cdots $ stands for the orthonormal basis for
 $L^2(\sd, h_\K^2(\om)d\si(\om)) $ described in subsection 2.3.
 \begin{prop}
 For any Schwartz class function $f$ on $\R^d$ let
 $$\widetilde{f}_{m,j}(r)= r^{-m}\int_{\sd}f(r\om)Y^h_{m,j}(\om)h_\K^2(\om)d\si(\om).$$
 Then we have the relation
 $$\int_{\sd}e^{-tH_{d,\K}}f(r\om)Y^h_{m,j}(\om) h_\K^2(\omega)d\si(\om)=
 c_{d, \gm} \; r^m \Big (T_{t}^{d/2+m+\gm-1}\widetilde{f}_{m,j} \Big )(r) .$$
 \end{prop}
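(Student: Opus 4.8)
The plan is to expand $f$ in spherical $h$-harmonics and reduce everything to the scalar identity \eqref{E} derived from the Funk--Hecke formula. First I would write, for a Schwartz function $f$ on $\R^d$,
$$ f(s\om') = \sum_{n=0}^{\infty}\sum_{i=1}^{d(n)} f_{n,i}(s)\, Y_{n,i}^h(\om'), \qquad f_{n,i}(s) = \int_{\sd} f(s\om')\, Y_{n,i}^h(\om')\, h_\K^2(\om')\, d\si(\om'), $$
so that $\widetilde{f}_{m,j}(r) = r^{-m} f_{m,j}(r)$. Applying the heat semigroup and using its integral representation with kernel $K_t(x,y)$ from \eqref{A}, I would compute
$$ e^{-tH_{d,\K}}f(r\om) = \int_0^\infty \int_{\sd} K_t(r\om, s\om')\, f(s\om')\, h_\K^2(\om')\, d\si(\om')\, s^{d+2\gm-1}\, ds, $$
after writing the measure $h_\K^2(y)\,dy$ in polar coordinates as $s^{d-1}h_\K^2(\om')\,d\si(\om')\,ds$ together with the homogeneity $h_\K^2(s\om') = s^{2\gm}h_\K^2(\om')$ — this is where the weight $s^{d+2\gm-1}$ comes from. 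Then I would integrate against $Y_{m,j}^h(\om)$ in $\om$, interchange the order of integration (justified since $f$ is Schwartz and the kernel is smooth and rapidly decaying for $t>0$), and invoke \eqref{E}, which says exactly that
$$ \int_{\sd} K_t(r\om, s\om')\, Y_{m}^h(\om')\, h_\K^2(\om')\, d\si(\om') = c_{d,\gm}(\sinh 2t)^{-1} e^{-\frac12(\coth 2t)(r^2+s^2)}\frac{\mathcal I_{\frac d2+\gm+m-1}\!\left(\frac{rs}{\sinh 2t}\right)}{(rs)^{\frac d2+\gm-1}}\, Y_m^h(\om). $$

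Carrying this out, the $\om$-integral of $e^{-tH_{d,\K}}f(r\om)Y_{m,j}^h(\om)$ picks out only the $(m,j)$ term of the expansion of $f$ (by orthonormality of the $Y_{n,i}^h$), and what remains is
$$ c_{d,\gm}\int_0^\infty (\sinh 2t)^{-1} e^{-\frac12(\coth 2t)(r^2+s^2)}\frac{\mathcal I_{\frac d2+\gm+m-1}\!\left(\frac{rs}{\sinh 2t}\right)}{(rs)^{\frac d2+\gm-1}}\, f_{m,j}(s)\, s^{d+2\gm-1}\, ds. $$
Now I would set $\dl = \frac d2+m+\gm-1$ and compare with the Laguerre heat kernel \eqref{D}, namely $K_t^\dl(r,s) = (\sinh 2t)^{-1}e^{-\frac12(\coth 2t)(r^2+s^2)}(rs)^{-\dl}\mathcal I_\dl\!\left(\frac{rs}{\sinh 2t}\right)$. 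The exponent of $(rs)$ in the display above is $-(\frac d2+\gm-1)$, whereas in $K_t^\dl$ it is $-\dl = -(\frac d2+m+\gm-1)$; the discrepancy is exactly $(rs)^{-m}$. Pulling out $r^{-m}$ and $s^{-m}$, the $s$-integral becomes $\int_0^\infty K_t^\dl(r,s)\,\big(s^{-m}f_{m,j}(s)\big)\, s^{2\dl+1}\, ds$ once we check that $s^{d+2\gm-1}\cdot s^{-m} = s^{2\dl+1}$ — indeed $2\dl+1 = d+2m+2\gm-1$, so this requires absorbing an extra $s^{m}$... wait, more carefully: $s^{d+2\gm-1}$ times the leftover $s^{-m}$ from the $(rs)^{-m}$ factor gives $s^{d+2\gm-1-m}$, and $2\dl+1 = d+2m+2\gm-1$, which do not match unless one also uses that $f_{m,j}(s) = s^m\widetilde f_{m,j}(s)$ contributes another $s^m$; combining, the integrand against $d\mu_\dl$ is $\widetilde f_{m,j}(s)$, and the result is $c_{d,\gm}\, r^m\, (T_t^\dl \widetilde f_{m,j})(r)$, since $T_t^\dl g(r) = \int_0^\infty K_t^\dl(r,s)g(s)\,d\mu_\dl(s)$. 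This yields the claimed identity.

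The main technical point — and the only place demanding genuine care — is the bookkeeping of the homogeneity exponents and the powers of $r$ and $s$: one must track the factor $h_\K^2(s\om') = s^{2\gm}h_\K^2(\om')$, the polar Jacobian $s^{d-1}$, the $(rs)^{-(d/2+\gm-1)}$ from \eqref{E}, and the two factors $r^{-m}$, $s^{-m}$ needed to match the order $\dl = d/2+m+\gm-1$ of the Bessel function $\mathcal I_\dl$ against the Laguerre kernel \eqref{D}, and see that everything collapses to $d\mu_\dl(s)$ acting on $\widetilde f_{m,j}$. A secondary point is the interchange of integration and the convergence of the $h$-harmonic expansion, but for $f$ Schwartz and $t>0$ this is routine: the expansion converges in $L^2(\sd, h_\K^2 d\si)$ for each $s$, uniformly enough after integration against the smooth rapidly-decaying kernel, and only the single term $Y_{m,j}^h$ survives the outer $\om$-integration by orthonormality. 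I do not anticipate any obstacle beyond careful exponent-matching; everything else follows from \eqref{A}, \eqref{E}, \eqref{D} and orthonormality of the $Y_{m,j}^h$.
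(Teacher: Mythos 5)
Your route is exactly the paper's: Proposition 2.1 is proved there in one line as an immediate consequence of the heat-kernel expression \eqref{A}, the Laguerre kernel \eqref{D} and the Funk--Hecke identity \eqref{E}, and your $h$-harmonic expansion plus orthonormality is precisely that computation written out, so the proposal is correct in both approach and conclusion. One bookkeeping slip should be fixed: since $(rs)^{-(d/2+\gm-1)}=(rs)^{m}\,(rs)^{-\dl}$ with $\dl=\frac d2+m+\gm-1$, the leftover factor is $(rs)^{+m}$, not $(rs)^{-m}$; you pull out $r^{m}$ (this is the $r^{m}$ in the statement), and the extra $s^{m}$ combines with $f_{m,j}(s)=s^{m}\widetilde f_{m,j}(s)$ and the polar weight $s^{d+2\gm-1}$ to give exactly $s^{2\dl+1}\,ds=d\mu_\dl(s)$, whereas with the sign as you wrote it the powers of $s$ would be short by $s^{2m}$ and the outside factor would come out as $r^{-m}$.
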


  The proof of this proposition is immediate from the expressions
  \eqref{A} and \eqref{D} for the kernels of $e^{-tH_{d,\K}}$ and
  $T_{t}^{d/2+m+\gm-1}$ and the Funk-Hecke formula.\\

 We make use of the following Lemma in order to prove that
 $ V $ is a dense subspace of  $L^{p,2}(\R^d, w(r)r^{d+2\gm-1} h^2_{\K}(\om) d\si(\om)dr).$
That $ V $ is a subspace follows immediately from the lemma as every member of $ V $ being a finite linear combination of $ \Phi_\alpha^\K $ is a Schwartz class function.\\

  \begin{lem}
  Let $1\leq p < \infty$  and  $f$ be a Schwartz class function on $\R^d$. Then
  $$\int_0^{\infty} \Big ( \int_{\sd}|f(r\om)|^2 h_{\K}^2(\om)d\si(\om)
  \Big )^{\frac{p}{2}}w(r)r^{d+2\gm-1}dr < \infty $$
  whenever $ w \in A_p^{d/2+\gm-1}(\R^+)$.
  \end{lem}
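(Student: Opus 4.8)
The plan is to estimate the inner spherical integral uniformly in $r$ by exploiting the fact that $f$ is Schwartz, and then to reduce the outer radial integral to a statement about a power weight lying in the Muckenhoupt class $A_p^{d/2+\gamma-1}(\R^+)$, where the $A_p$ condition is precisely what we need. First I would note that the measure $h_\K^2(\om)d\si(\om)$ on $\sd$ is finite, since $h_\K^2$ is a product of powers $|\La \om,\nu\Ra|^{2\K(\nu)}$ with $\K(\nu)\geq 0$ and $\sd$ is compact, so $\om \mapsto h_\K^2(\om)$ is bounded on the sphere. Hence by Cauchy--Schwarz (or just by boundedness of the mass),
\[
\Big(\int_{\sd}|f(r\om)|^2 h_\K^2(\om)\,d\si(\om)\Big)^{1/2} \leq C_\K \sup_{\om\in\sd}|f(r\om)|.
\]
Since $f$ is Schwartz, for every $N$ there is $C_{f,N}$ with $|f(x)| \leq C_{f,N}(1+|x|)^{-N}$, so the above is bounded by $C\,(1+r)^{-N}$, and in particular by a constant near $r=0$ and by any negative power of $r$ near infinity.

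With this pointwise bound in hand, the integral in question is dominated by
\[
C\int_0^{\infty} (1+r)^{-Np}\, w(r)\, r^{d+2\gm-1}\,dr,
\]
and it remains to show this is finite for $w \in A_p^{d/2+\gm-1}(\R^+)$, i.e.\ $w\,d\mu_{d/2+\gm-1}$ with $d\mu_\ap(t)=t^{2\ap+1}dt$ and $2\ap+1 = d+2\gm-1$. The key point is that membership in $A_p$ with respect to the doubling measure $d\mu_{d/2+\gm-1}$ forces a polynomial growth bound on $w$: there is a constant $\kappa$ (depending on the $A_p$ characteristic and on $p$) such that the $\mu_\ap$-measure of a ball grows at most polynomially, and more to the point, comparing a fixed ball $B(0,1)$ with the ball $B(0,R)$ in the standard $A_p$-inequality applied to the test function $\chi_{B(0,1)}$ yields
\[
\int_{B(0,R)} w\,d\mu_\ap \;\leq\; C\,\Big(\frac{\mu_\ap(B(0,R))}{\mu_\ap(B(0,1))}\Big)^{p}\int_{B(0,1)} w\,d\mu_\ap \;\leq\; C' R^{(d+2\gm)p}.
\]
A dyadic decomposition $\R^+ = \{r<1\}\cup\bigcup_{k\geq 0}\{2^k\leq r < 2^{k+1}\}$ then bounds $\int_0^\infty (1+r)^{-Np}w(r)r^{d+2\gm-1}dr$ by a constant (from the unit ball, where $\int_{B(0,1)}w\,d\mu_\ap<\infty$ since $w$ is locally integrable) plus $\sum_{k\geq 0} 2^{-Npk}\int_{B(0,2^{k+1})} w\,d\mu_\ap \leq C\sum_{k\geq 0} 2^{-Npk}2^{(d+2\gm)p(k+1)}$, which converges once $N$ is chosen with $Np > (d+2\gm)p$, i.e.\ $N > d+2\gm$. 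Since $f$ Schwartz allows $N$ arbitrarily large, this is no constraint.

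The one genuine point requiring care — the main obstacle, such as it is — is the behaviour near $r=0$: one must know that $w$ is integrable against $d\mu_\ap$ on the unit ball, which is true because every $A_p$ weight is locally integrable with respect to its underlying doubling measure (indeed $w \in L^1_{loc}(d\mu_\ap)$ is built into the definition, and the reverse Hölder property gives slightly more). There is no issue from the factor $r^{d+2\gm-1}=r^{2\ap+1}$ itself since $2\ap+1 = d+2\gm-1 > 0$ for $d\geq 2$ and $\gm\geq 0$, and the $\sup_\om|f(r\om)|$ bound is harmless at the origin. Thus the estimate follows, and as noted in the text it gives both that $V$ consists of functions in the mixed-norm space and, combined with a density argument, that $V$ is dense there.
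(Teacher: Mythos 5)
Your proposal is correct and follows essentially the same route as the paper: bound the spherical integral by the Schwartz decay of $f$ (the paper works with $f_0(r)=\big(\int_{\sd}|f(r\om)|^2h_\K^2\,d\si\big)^{1/2}$ and its continuity, you use $\sup_\om|f(r\om)|$ and the boundedness of $h_\K^2$ on $\sd$, which amounts to the same thing), then split off the region near $r=0$ using local integrability of $w$, and handle $r\geq 1$ by a dyadic decomposition together with the growth estimate $\int_0^R w(r)r^{d+2\gm-1}dr\leq CR^{(d+2\gm)p}$, choosing $N>d+2\gm$. The only cosmetic difference is that the paper cites Nowak for this growth bound while you sketch the standard $A_p$ comparison argument that yields it, which is fine.
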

  \begin{proof}
  First we observe that if $f$ is a Schwartz class function on
  $\R^d$, then the function $f_0(r) := \Big ( \int_{\sd}|f(r\om)|^2
  h_\K^2(\om)d\si{\om} \Big )^{\frac{1}{2}} $ is a continuous
  function on $\R^+$ and for every positive integer $N$ there exists $C_N >0$ such that
  $ f_0(r)\leq C_N (1+r)^{-N}$ for all $r \in \R^+ $.
  To prove the lemma, it is enough to prove that
  $$\int_0^{\infty} (f_0(r))^p w(r)r^{d+2\gm-1}dr < \infty .$$
  Let $\dl = d/2+\gm-1 $ and the above integral can be written as
  $$\int_0^\infty (f_0(r))^p w(r) r^{2\dl+1}dr = \left( \int_0^1 +\int_1^\infty \right)
  ((f_0(r))^p w(r) r^{2\dl+1}dr). $$
  The first integral on the  left hand side of the above is finite as
  $f_0$ is continuous and $w$ is locally integrable. And the second
  integral can be written as
  $$\sum_{j=1}^{\infty}\int_{2^{j-1}\leq r < 2^j}(f_0(r))^p w(r) r^{2\dl+1}dr $$
  which can be bounded by
  $$\sum_{j=1}^{\infty} (f_0(r_j))^p\int_{0}^{2^j} w(r) r^{2\dl+1}dr $$
  where $r_j \in [2^{j-1}, 2^j]$ are the points at which $f_0$ attains maximum on
  $[2^{j-1}, 2^j]$. Such $r_j$'s exist in the closed interval
  $[2^{j-1}, 2^j]$, since $f_0$ is continuous. The $A_p$-weight condition on
  $w$ implies $\int_{0}^{R} w(r) r^{2\dl+1}dr\leq C R^{2p(\dl+1)}$
  for $R>0$, see  page 252, Eqn.16  in \cite{NA}. Choose a positive integer $N$ such that $N > 2(\dl+1)$.
  Finally we see that
  \begin{eqnarray*}
    \int_1^\infty (f_0(r))^p w(r) r^{2\dl+1}dr &\leq& \sum_{j=1}^{\infty} ((1+r_j)^Nf_0(r_j))^p (1+r_j)^{-Np}\:2^{2pj(\dl+1)}\\
    &\leq& C \sum_{j=1}^{\infty}  2^{-jNp}  2^{2pj(\dl+1)}\\
     &\leq& C \sum_{j=1}^{\infty}  2^{-j p(N-2(\delta+1))} <\infty.
  \end{eqnarray*}
  The second inequality in the above is due to the facts that $(1+r)^Nf_0(r)$ is
  bounded on $\R^+$ and $1+r_j \geq 2^{j-1}$. This proves the lemma.
  \end{proof}

  We are now in a position to prove the density of $V$ in the mixed norm space
  $L^{p,2}(\R^d, w(r)r^{d+2\gm-1} h^2_{\K}(\om) d\si(\om)dr)$ for
  $1<p<\infty $, $w \in A_p^{d/2+\gm-1}(\R^+) $.
  If $V$ is not dense in
  $L^{p,2}(\R^d, w(r)r^{d+2\gm-1} h^2_{\K}(\om)  d\si(\om)dr)$, by duality there exists a
  nontrivial function $f\in L^{p',2}(\R^d, w(r)r^{d+2\gm-1}h^2_{\K}(\om)  d\si(\om)dr)$
  (where $\frac{1}{p}+\frac{1}{p'}=1$)  such that
  \begin{equation}\label{y}\int_{\R^d}f(y)\Phi_{\ap}^{\K}(y)w(|y|)h_\K^2(y)dy = 0 \end{equation}
  for all $\ap \in \mathbb{N}^d$.  Since $w \in A_p^{d/2+\gm-1}(\R^+)$
  if and only if $w^{1-p'}\in A_{p'}^{d/2+\gm-1}(\R^+) $ it follows that the
  function $g $ defined by $ g(y) = f(y) w(|y|)$ belongs to $ L^{p',2}(\R^d, w^{1-p'}(r)r^{d+2\gm-1}h^2_{\K}(\om) d\si(\om)dr)$.
Since the heat kernel $ K_t(x,y) $ is a Schwartz function, it follows from Lemma 2.2 that $ e^{-tH_{d,\K}}g $ is well defined. Moreover, by Mehler's formula
$$ e^{-tH_{d,\K}}g(x) = \sum_{\alpha \in \mathbb{N}^d} e^{-(2|\alpha|+d+2\gamma)t}\left(\int_{\R^d} g(y)\Phi_{\ap}^{\K}(y)h_\K^2(y)dy \right) \Phi_\alpha^\K(x).$$   
Consequently, $e^{-tH_{d,\K}}g=0$ for all $ t > 0 $  in view of \eqref{y}.
    In view of Proposition 2.1 it follows that
  for any $m=0,1,2,\ldots $, $j=1,2,\cdots ,d(m)$
  $$(T_{t}^{d/2+m+\gm-1}\widetilde{g}_{m,j})(r)=0 .$$
  Hence we only need to conclude that the above implies $\widetilde{g}_{m,j}=0$ for
  all $m$ and $j$ which leads to a contradiction.\\

  But this follows from the theory of Laguerre semigroups. Indeed,
  what we have is
  $$\int_0^{\infty}(rs)^mK_t^{d/2+m+\gm-1}(r,s)f_{m,j}(s)w(s)s^{d+2\gm-1}ds = 0 .$$
  Here $w \in A_p^{d/2+\gm-1}(\R^+)$ and $f_{m,j} \in L^{p'}(\R^+, w(s)d\mu_{d/2+\gm-1}(s))$.
  Once again, the above can be rewritten as
  $$\int_0^{\infty}(rs)^mK_t^{d/2+m+\gm-1}(r,s)g_{m,j}(s)s^{d+2\gm-1}ds = 0$$
  for all $t>0.$ Note that the function $g_{m,j}(s)=f_{m,j}(s)w(s) $ belongs to
  $L^{p'}(\R^+, w^{1-p'}(s)d\mu_{d/2+\gm-1}(s))$ with $w^{1-p'} \in
  A_{p'}^{d/2+\gm-1}(\R^+)$. Invoking the fact that the modified
  Laguerre semigroup $\widetilde{T_{t}}^{d/2+m+\gm-1} $ defined by
  $$\widetilde{T_{t}}^{d/2+m+\gm-1}h(r) = \int_0^{\infty}(rs)^m
  K_t^{d/2+m+\gm-1}(r,s)h(s)s^{d+2\gm-1}ds  $$
  is strongly continuous on $L^{p'}(\R^+, u(s) s^{d+2\gm-1}ds) $ for any
  $u \in A_{p'}^{d/2+\gm-1}(\R^+) $ we conclude that
  $g_{m,j}=0 $ for all $m$ and $j$.\\

  Finally, we briefly indicate how the strong continuity of $\widetilde{T_{t}}^{d/2+m+\gm-1} $
  can be proved. It is almost trivial to prove that the kernel of
  this semigroup satisfies the estimates stated in Proposition 3.4
  of Ciaurri-Roncal \cite{CR}. Actually, we need not care about the
  uniformity  in  $m$. These estimates in turn can be used to prove
  that $\widetilde{T_{t}}^{d/2+m+\gm-1}f $ is dominated by the maximal
  function $M_{d/2+\gm-1}f$ adapted to the homogeneous space
  $(\R^+, d\mu_{d/2+\gm-1})$. As this maximal function is known to be
  bounded on $L^p(\R^+, wd\mu_{d/2+\gm-1})$, $w \in A_p^{d/2+\gm-1}(\R^+)$,
  see e.g. Duoandikoetxea \cite{DJ}, we conclude that $\widetilde{T_{t}}^{d/2+m+\gm-1}$
  is strongly continuous on $L^p(\R^+, wd\mu_{d/2+\gm-1})$, $w \in A_p^{d/2+\gm-1}(\R^+)$,
  $1<p<\infty$. This completes the proof.

 \section{Riesz transforms for the Dunkl harmonic Oscillator}

 \subsection{} As in the case of Hermite operator which
 corresponds to the case $\K = 0$, we define the Riesz transforms
 $R_j^{\K}$ , $j=1,2,\ldots ,d$ associated to the Dunkl harmonic
 oscillator $ H_{d,\K}$ by
 $$R_j^{\K}f=(T_j+x_j)H_{d,\K}^{-\frac{1}{2}}f $$
 where $ H_{d,\K}^{-\frac{1}{2}}$ is defined by spectral theorem.
 More precisely,
 $$H_{d,\K}^{-\frac{1}{2}}f = \sum_{\ap}(2|\ap|+2\gm+d)^{-\frac{1}{2}}
 (f,\Phi_{\ap}^{\K})\Phi_{\ap}^{\K} $$
 where $\Phi_{\ap}^{\K} $ are the generalised Hermite functions and
 $(f,\Phi_{\ap}^{\K})=\int_{\R^d}f(x)\Phi_{\ap}^{\K}(x)\\h_{\K}^2(x)dx$.
 We can also define $R_j^{\K *}$ by as in the Hermite case. It is easy to see that
 $R_j^{\K} $ are bounded on $L^2(\R^d, h_{\K}^2(x)dx)$, see Proposition 2.1 in \cite{A} . In
 the same paper, Amri has proved that $R_j^{\K}$ are singular integral operators whose kernels
 satisfy a modified Calderon-Zygmund condition and hence by a
 theorem of Amri and Sifi \cite{AS} they are all bounded on $L^p(\R^d,
 h_{\K}^2(x)dx)$, $1<p<\infty $.\\

 In the case of Hermite operator, the Riesz transforms satisfy
 weighted norm estimates. More precisely, if $w \in A_p(\R^d)$,
 then $R_j^0 $ are bounded on $L^p(\R^d, wdx)$, $1<p<\infty$. This
 has been proved by Stempak and Torrea \cite{STR} and it follows from the
 fact that the kernels of $R_j^0$ satisfy standard
 Calderon-Zygmund conditions. In the present situation we do not
 have weighted inequalities for the Riesz transforms $R_j^{\K}$.
 Later we will show that the weighted inequalities for $R_j^0$ can
 be used to prove mixed norm inequalities for the Hermite Riesz
 transforms which will then be used to prove similar results for
 $R_j^{\K}$.\\

 Assume that $ 2\gm $ is an integer. Then the action of $H_{d,\K} $ on radial
 functions coincides with that of $H_{d+2\gm} $ on radial functions. More
 generally, let $f(x)=g(r)Y^h(\om)$, $ r=|x|, \om \in \sd $ where $Y^h$
 is h-harmonic of degree $m$. Then Mehler's formula for the
 generalised Hermite functions along with Funk-Hecke formula
 yields the result
 $$e^{-tH_{d,\K}}f(x)=ce^{-tH_{d+2\gm+2m}}g(|x|)Y^h(\om) $$
 where on the right hand side $g$ is considered as a radial
 function on $\R^{d+2\gm+2m}$. It is also possible to write
 $e^{-tH_{d+2\gm+2m}}g(|x|)$ in terms of Laguerre semigroup. We
 will make use of these observations in the proof of our main
 result.

 \subsection{More on h-harmonics:} As indicated in the previous
 subsection, we plan to expand the given function $f$ on $\R^d$ in
 terms of h-harmonics. In order to find out the action of Riesz
 transforms on individual terms which are of the form $ g(|x|)Y_m^h(\om)$
 we need formulas for the action of $T_j$ on such terms. More generally
 we let $\N^{\K}=(T_1, T_2,\cdots, T_d)$ stand for the Dunkl
 gradient which is the sum of the gradient $\N = \Big
 (\frac{\pa}{\pa x_1},\cdots,\frac{\pa}{\pa x_d} \Big )$
 and $E^{\K}$ where
 $$E^{\K}f(x)=\sum_{\nu \in R^+}\K(\nu)\; \frac{f(x)-f(\si_{\nu}x)}{\La x,\nu \Ra}\; \nu .$$
 Let $\N_0$ be the spherical part of $\N$. Then the Dunkl gradient
 is written as
 $$\N^{\K}=\om\frac{\pa}{\pa r}+\frac{1}{r}\N_0^{\K} $$
 with $\N_0^{\K} = \N_0 + E_0^{\K}$ standing for the spherical part
 of the Dunkl gradient, where $E_0^{\K}f(\om)=\sum_{\nu \in R^+}\K(\nu)\;
 \frac{f(\om)-f(\si_{\nu}\om)}{\La \om,\nu \Ra}\; \nu$ for functions $f$ defined on $\sd $.\\

 For $\xi \in \R^d $, let $T_{\xi}$ stand for the Dunkl derivative
 given by
 $$T_{\xi}f = \pa_{\xi}f+\sum_{\nu \in R^+}\K(\nu)\La \nu,\xi \Ra  \;
 \frac{f(x)-f(\si_{\nu}x)}{\La x, \nu \Ra}. $$
 If one of $f$ and $g$ is $G$-invariant then
 $$T_{\xi}(fg)=f\; T_{\xi}g+(T_{\xi}f)\; g $$
 remains true. Moreover, we also know that
 $$\int_{\R^d}T_{\xi}f(x)g(x)h_{\K}^2(x)dx = -\int_{\R^d}f(x)T_{\xi}g(x)h_{\K}^2(x)dx. $$
 In view of this we get
 $$\int_{\R^d}\La \N^{\K}f(x), \N^{\K} g(x)\Ra h_{\K}^2(x)dx =
 -\int_{\R^d}\D_{\K}f(x)g(x)h_{\K}^2(x)dx $$
 We will make use of these properties in the following
 calculation.\\

 We begin with some simple observations. When $f$ is a radial
 function we have
 $$\N^{\K}(fg)=f\; \N^{\K}g + g\; \N^{\K}f $$
 and consequently
 \begin{eqnarray}
   \N^{\K}(fg)(r\om)=f(r)\; \N^{\K}g(r\om)+g(r\om)\; \frac{\pa f}{\pa r}\;
   \om.
 \end{eqnarray}
 Let $Y_m$ be a homogeneous polynomial of degree $m$ on $\R^d$.
 Then
 \begin{eqnarray}
  \sum_{j=1}^{d}(\N_0)_j(\om_jY_m(\om))= (d-1)Y_m(\om) \label{B}
 \end{eqnarray}
 where $(\N_0)_j$ stand for the $j^{th}$ component of $\N_0$. To
 see this, consider
 $$\sum_{j=1}^d\frac{\pa}{\pa x_j}(x_jY_m(x))=dY_m(x)+\sum_{j=1}^d
 x_j\frac{\pa}{\pa x_j} Y_m(x)=(m+d)Y_m(x) $$
 in view of Euler's formula. On the other hand
 $$\sum_{j=1}^d\frac{\pa}{\pa x_j}(x_jY_m(x))=\sum_{j=1}^d\frac{\pa}{\pa x_j}
 (r^{m+1}\om_jY_m(\om)) .$$
 Since $\frac{\pa}{\pa x_j}=\om_j\frac{\pa}{\pa r}+\frac{1}{r}(\N_0)_j
 $ it follows that
 $$\sum_{j=1}^d\frac{\pa}{\pa x_j}(x_jY_m(x))=(m+1)Y_m(x)+\sum_{j=1}^dr^{m}
 (\N_0)_j(\om_jY_m(\om)) .$$
 Comparing this with the earlier expression we get the assertion.
 \begin{prop}
 Let $Y_n$ and $Y_m$ be h-harmonic polynomials of degree $n$ and
 $m$ respectively. Then we have the following identities.
 Let $\rho^{\K}Y_n(\om)= \sum_{\nu \in R^+}\K(\nu)Y_n( \si_{\nu}\om)$.
 \begin{enumerate}
 \item $ \La \N_0^{\K}Y_n(\om), \om \Ra = \gm
 Y_n(\om)-\rho^{\K}Y_n(\om)$ \\

 \item $\La \N^{\K}Y_n(x), \om \Ra =
 r^{n-1}((n+\gm)Y_n(\om)-\rho^{\K}Y_n(\om))$\\

 \item $\sum_{j=1}^{d}(\N_0^{\K})_j(\om_jY_n(\om))=
 (d+\gm-1)Y_n(\om)+\rho^{\K}Y_n(\om)$\\

 \item $ \int_{\sd}\La \N_0^{\K}Y_n(\om), \N_0^{\K} Y_m(\om)\Ra
 h_{\K}^2(\om)d\si(\om) = 0$ if $n \neq m.$

 \end{enumerate}

 \end{prop}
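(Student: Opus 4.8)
The identities (1)--(3) are direct computations from the definitions of $\N^\K$, $\N_0^\K$, $E^\K$, $E_0^\K$, and (4) is an orthogonality statement that I would extract from the integration‑by‑parts identity $\int_{\R^d}\La\N^\K f,\N^\K g\Ra h_\K^2\,dx = -\int_{\R^d}(\Dk f)g\,h_\K^2\,dx$ recorded above.

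\textbf{Parts (1) and (2).} I would prove (2) first. Splitting $\N^\K=\N+E^\K$ and using Euler's formula $\La\N Y_n(x),x\Ra=nY_n(x)$, the gradient part gives $\La\N Y_n(x),\om\Ra=r^{-1}\La\N Y_n(x),x\Ra=nr^{n-1}Y_n(\om)$. For the difference part, since $\La x,\nu\Ra=r\La\om,\nu\Ra$ one has $\La\nu,\om\Ra/\La x,\nu\Ra=1/r$, so by homogeneity of $Y_n$,
$$\La E^\K Y_n(x),\om\Ra=\frac1r\sum_{\nu\in R^+}\K(\nu)\bigl(Y_n(x)-Y_n(\si_\nu x)\bigr)=r^{n-1}\bigl(\gm Y_n(\om)-\rho^\K Y_n(\om)\bigr).$$
Adding the two contributions gives (2). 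For (1), substitute $Y_n(x)=r^nY_n(\om)$ into the polar decomposition $\N^\K=\om\,\pa_r+r^{-1}\N_0^\K$, obtaining $\N^\K Y_n(r\om)=nr^{n-1}Y_n(\om)\,\om+r^{n-1}\N_0^\K Y_n(\om)$; pairing with $\om$ and comparing with (2) isolates $\La\N_0^\K Y_n(\om),\om\Ra=\gm Y_n(\om)-\rho^\K Y_n(\om)$. (Alternatively, (1) is immediate once one notes that the tangential part $\N_0$ satisfies $\La\N_0 f,\om\Ra=0$, so only $E_0^\K$ contributes.)

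\textbf{Part (3).} Write $(\N_0^\K)_j=(\N_0)_j+(E_0^\K)_j$. By \eqref{B}, $\sum_{j=1}^d(\N_0)_j(\om_jY_n(\om))=(d-1)Y_n(\om)$. For the remaining sum, collecting the factors $\nu_j$ into inner products,
$$\sum_{j=1}^d(E_0^\K)_j(\om_jY_n(\om))=\sum_{\nu\in R^+}\frac{\K(\nu)}{\La\om,\nu\Ra}\bigl(\La\nu,\om\Ra Y_n(\om)-\La\nu,\si_\nu\om\Ra Y_n(\si_\nu\om)\bigr),$$
and the reflection identity $\La\nu,\si_\nu\om\Ra=-\La\nu,\om\Ra$ collapses this to $\gm Y_n(\om)+\rho^\K Y_n(\om)$. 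Summing yields (3).

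\textbf{Part (4).} I would apply the identity $\int_{\R^d}\La\N^\K f,\N^\K g\Ra h_\K^2\,dx=-\int_{\R^d}(\Dk f)g\,h_\K^2\,dx$ with $f(x)=\chi(r)Y_n(x)$ and $g(x)=\chi(r)Y_m(x)$, where $\chi$ is a fixed nonnegative smooth radial function, not identically zero and supported away from the origin (the Gaussian $e^{-r^2/2}$ works equally well). On the right‑hand side, $\Dk$ maps functions of the form (radial)$\cdot Y_n(\om)$, $Y_n\in\Hc_n^d$, to functions of the same form (classical, Chapter 5 of \cite{DUX}, and implicit in Proposition 2.1), so $\Dk f=\varrho(r)Y_n(\om)$ for some radial $\varrho$; hence the $\sd$‑integral of $(\Dk f)g\,h_\K^2$ carries the factor $\int_{\sd}Y_nY_m h_\K^2\,d\si=0$, and the whole right‑hand side vanishes. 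On the left, the product rule (valid since $\chi(r)$ is $G$‑invariant) together with the polar formula for $\N^\K Y_n$ gives $\N^\K f(r\om)=\chi(r)r^{n-1}\N_0^\K Y_n(\om)+\bigl(n\chi(r)r^{n-1}+r^n\chi'(r)\bigr)Y_n(\om)\,\om$, and likewise for $g$. Expanding $\La\N^\K f,\N^\K g\Ra$, integrating over $\sd$, and using part (1) together with the orthogonalities $\int_{\sd}Y_nY_m h_\K^2\,d\si=\int_{\sd}Y_m\,\rho^\K Y_n\,h_\K^2\,d\si=0$ for $n\ne m$ (here $\rho^\K Y_n$ is again an $h$‑harmonic of degree $n$ because $\Dk$ commutes with the action of $G$), every cross term and the $Y_nY_m$ term drop out, leaving
$$0=\Bigl(\int_{\sd}\La\N_0^\K Y_n(\om),\N_0^\K Y_m(\om)\Ra h_\K^2(\om)\,d\si(\om)\Bigr)\int_0^\infty\chi(r)^2\,r^{\,n+m+2\gm+d-3}\,dr.$$
Since $n\ne m$ forces $n+m\ge1$, the radial integral is a strictly positive constant, and (4) follows.

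\textbf{Main obstacle.} The delicate point is the bookkeeping of the several cross terms in the left‑hand expansion of (4): each must be shown to integrate to zero over $\sd$, which rests on recognizing that $\rho^\K Y_n$ stays in $\Hc_n^d$ (hence is $h_\K^2\,d\si$‑orthogonal to $Y_m$ when $n\ne m$); one should also check that the integration‑by‑parts identity is being applied to functions for which the $d$ boundary terms vanish — which is precisely why the cutoff $\chi$ (or the Gaussian weight) is inserted.
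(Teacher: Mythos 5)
Your proposal is correct and follows essentially the same route as the paper: parts (1)--(3) by the same direct computations with $\N_0^\K=\N_0+E_0^\K$ and the reflection identity $\La\nu,\si_\nu\om\Ra=-\La\om,\nu\Ra$, and part (4) by evaluating $\int_{\R^d}\La\N^\K f,\N^\K g\Ra h_\K^2\,dx=-\int_{\R^d}(\Dk f)\,g\,h_\K^2\,dx$ for $f,g$ of the form (radial)$\cdot Y_n$, (radial)$\cdot Y_m$ in two ways, with the cross terms killed via (1) and the fact that $\rho^\K Y_m$ remains an h-harmonic of degree $m$. The only differences are cosmetic (proving (2) before (1), allowing a cutoff in place of the Gaussian), and your justification that $\Hc_m^h$ is preserved because $\Dk$ commutes with the $G$-action is in fact a more precise version of the paper's appeal to orthogonal invariance.
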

 \begin{proof}
 (1) follows from the definition of $\N_0^{\K} = \N_0 + E_0^{\K}$ and the fact that $\La \N_0Y_n(\om),\om \Ra
 = 0$ for any homogeneous polynomial, see Lemma 2.2 in \cite{PX}. (2) follows from (1)
 since
 \begin{eqnarray}
   \N^{\K}Y_n(x)=nr^{n-1}Y_n(\om)\om +r^{n-1}\N_0^{\K}Y_n(\om).
 \end{eqnarray}
 To prove (3) use the definition of $\N_0^{\K} = \N_0 + E_0^{\K}$;
 $$\sum_{j=1}^d (\N_0^{\K})_j(\om_jY_n(\om)) =\sum_{j=1}^d (\N_0)_j
 (\om_jY_n(\om))+ \sum_{j=1}^d (E_0^{\K})_j(\om_jY_n(\om))$$
 and
\begin{eqnarray*}
  \sum_{j=1}^d (E_0^{\K})_j(\om_jY_n(\om)) & = & \sum_{j=1}^d
  \sum_{\nu \in R^+}\K(\nu) \frac{\om_jY_n(\om)-(\si_{\nu}\om)_jY_n(\si_{\nu}\om)}{\La \nu ,
  \om \Ra } \nu_j \\
     & = & \gm Y_n(\om)- \sum_{\nu \in R^+} \K(\nu) \frac{Y_n(\si_{\nu}\om)
     \La \si_{\nu}\om, \nu \Ra }{\La \nu , \om \Ra }
 \end{eqnarray*}
 Since $\La \si_{\nu}\om, \nu \Ra = \La \om,
 \si_{\nu}\nu \Ra = -\La \om , \nu \Ra$ we get (3) in view of
 \eqref{B} and the definition of $\rho^{\K}$.\\

 Finally, in order to prove (4) we evaluate the integral
 $$\int_{\R^d} \La \N^{\K}f(x), \N^{\K}g(x) \Ra h_{\K}^2(x)dx $$
 where $f(x)= e^{-\frac{1}{2}|x|^2}Y_n(x)$ and $g(x) =
 e^{-\frac{1}{2}|x|^2}Y_m(x)$ in two different ways. As we have
 already observed, the above integral is equal to
 $$-\int_{\R^d} \D_{\K}f(x)g(x)h_{\K}^2(x)dx. $$
 The Dunkl Laplacian decomposes as (see Dunkl-Xu \cite{DUX} )
 $$\D_{\K} = \frac{\pa^2}{\pa r^2} + \frac{2\lambda_{\K}+1}{r}\;
 \frac{\pa}{\pa r} +\frac{1}{r^2}\D_{\K , 0}=p(\pa_r)+\frac{1}{r^2}\D_{\K , 0}$$
 where $\lambda_{\K} = \gm + \frac{d-2}{2}$ and $p(\pa_r)=
 \frac{\pa^2}{\pa r^2} + \frac{2\lambda_{\K}+1}{r}\; \frac{\pa}{\pa r}$. Thus
 $$\D_{\K}f(x)=p(\pa_r)(r^n e^{-\frac{1}{2}r^2})Y_n(\om)+r^{n-2}
 e^{-\frac{1}{2}r^2}\D_{\K , 0}Y_n(\om).  $$
 Since h-harmonics are eigenfunctions of the spherical part
 $\D_{\K,0}$ we have
 $$\D_{\K,0}Y_n(\om)= -n(n+\lambda_{\K})Y_n(\om) $$
 and consequently,
 $$\D_{\K}f(x)=p(\pa_r)(r^n e^{-\frac{1}{2}r^2})Y_n(\om)-n(n+\lambda_{\K})
 r^{n-2}e^{-\frac{1}{2}r^2}Y_n(\om). $$
 Clearly, integrating the above against
 $g(x)=e^{-\frac{1}{2}r^2}Y_m(\om)$ produces $0$ whenever $m\neq
 n$.\\

 We will now evaluate the same integral using the expression
 $\N^{\K} = \om \frac{\pa}{\pa r}+\frac{1}{r}(\N_0 + E_0^{\K})$. Note that
 $$\N^{\K}f(x)=(nr^{n-1}-r^{n+1})e^{-\frac{1}{2}r^2}Y_n(\om)\om + r^{n-1}
 e^{-\frac{1}{2}r^2}\N_0^{\K}Y_n(\om) $$
 with a similar expression for $\N^{\K}g(x) $. Thus $\La \N^{\K}f(x), \N^{\K}g(x) \Ra
 $ involves terms of the form $Y_n(\om)Y_m(\om)$, $Y_n(\om)\La \om , \N_0^{\K}Y_m(\om) \Ra $,
 $Y_m(\om)\La \om , \N_0^{\K}Y_n(\om) \Ra $ and\\
 $ \La \N_0^{\K}Y_n(\om), \N_0^{\K}Y_m(\om) \Ra$. Hence the
 proposition will be proved if we show that
 $$\int_{\sd}Y_n(\om)\La \om , \N_0^{\K}Y_m(\om) \Ra h_{\K}^2(\om) d\si(\om) = 0 $$
 whenever $n \neq m $. In view of (1) of the proposition it suffices to
 show that
 $$\int_{\sd}Y_n(\om)(\sum_{\nu \in R^+}\K(\nu)Y_m( \si_{\nu }\om))
 h_{\K}^2(\om) d\si(\om) = 0 .$$
 But this is obvious, since the space $\Hc_n^h $ is invariant under the
 action of the orthogonal group. This completes the proof of
 (4).
 \end{proof}
 If $Y_{m,j} $ and $Y_{m,k} $ are h-harmonics of the same degree which are
 orthogonal to each other, then we cannot claim that
 $$\int_{\sd}\La \N_0^{\K}Y_{m,j}(\om), \N_0^{\K} Y_{m,k}(\om)\Ra
 h_{\K}^2(\om)d\si(\om) = 0 .$$
 This is clear from the above proof. However, if we assume that $Y_{m,j} $ and $Y_{m,k} $
 are both $G$-invariant, then the orthogonally relation
 holds.
 \begin{prop}
 Let $Y_{m,j} $ and $Y_{m,k} $ be h-harmonics of degree $m$ which are
 $G$-invariant. Then
 \begin{eqnarray}
 & & \int_{\sd}\La \N_0^{\K}Y_{m,j}(\om), \N_0^{\K} Y_{m,k}(\om)\Ra h_{\K}^2(\om)d\si(\om)
 \\ \notag
 & & = \lambda_d(m,\gm)\int_{\sd}Y_{m,j}(\om) Y_{m,k}(\om)h_{\K}^2(\om)d\si(\om)
 \end{eqnarray}

 where $\lambda_d(m,\gm)= m(m+\lambda_{\K})$, with $\lambda_{\K} = \gm + \frac{d-2}{2}$.
 \end{prop}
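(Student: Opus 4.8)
The plan is to adapt the argument used for part (4) of Proposition 3.3, evaluating the integral $\int_{\R^d}\La \N^{\K}f(x),\N^{\K}g(x)\Ra h_{\K}^2(x)\,dx$ in two different ways for the choice $f(x)=e^{-\frac12|x|^2}Y_{m,j}(x)$ and $g(x)=e^{-\frac12|x|^2}Y_{m,k}(x)$, where $Y_{m,j}$ and $Y_{m,k}$ are regarded as solid h-harmonics of degree $m$ (these $f,g$ are Schwartz, so all the manipulations below are legitimate). One may assume $m\geq 1$; for $m=0$ the h-harmonics are constants, $\N_0^{\K}$ annihilates them, and both sides of the asserted identity vanish. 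Throughout I would use $h_{\K}^2(x)\,dx = r^{2\lambda_{\K}+1}h_{\K}^2(\om)\,dr\,d\si(\om)$, which holds since $h_{\K}^2$ is homogeneous of degree $2\gm$ and $2\lambda_{\K}+1=d+2\gm-1$, and abbreviate $B=\int_{\sd}Y_{m,j}(\om)Y_{m,k}(\om)h_{\K}^2(\om)\,d\si(\om)$ and $C$ for the quantity on the left-hand side of the asserted identity.

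For the first evaluation I would use the integration by parts identity $\int_{\R^d}\La \N^{\K}f,\N^{\K}g\Ra h_{\K}^2\,dx = -\int_{\R^d}(\Dk f)\,g\,h_{\K}^2\,dx$ noted in Subsection 3.2, together with the decomposition $\Dk = p(\pa_r)+r^{-2}\D_{\K,0}$ and the fact that $\D_{\K,0}Y_{m,j}=-\lambda_d(m,\gm)Y_{m,j}$. Since $f(r\om)=r^m e^{-\frac12 r^2}Y_{m,j}(\om)$ and likewise for $g$, the radial and angular variables separate, so $\Dk f(r\om)=\phi(r)Y_{m,j}(\om)$ with $\phi(r)=p(\pa_r)(r^m e^{-\frac12 r^2})-\lambda_d(m,\gm)\,r^{m-2}e^{-\frac12 r^2}$, and this evaluation becomes $-A\,B$, where $A=\int_0^\infty\phi(r)\,r^m e^{-\frac12 r^2}\,r^{2\lambda_{\K}+1}\,dr$.

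For the second evaluation I would use $\N^{\K}=\om\,\pa_r+r^{-1}\N_0^{\K}$, which, exactly as in the proof of Proposition 3.3, gives $\N^{\K}f(r\om)=h'(r)Y_{m,j}(\om)\,\om + r^{m-1}e^{-\frac12 r^2}\N_0^{\K}Y_{m,j}(\om)$ with $h(r)=r^m e^{-\frac12 r^2}$, and similarly for $g$. The inner product $\La \N^{\K}f,\N^{\K}g\Ra$ then splits into a ``diagonal'' $\om$-term, a ``diagonal'' $\N_0^{\K}$-term, and two cross terms; the cross terms carry the factors $\La\om,\N_0^{\K}Y_{m,k}(\om)\Ra$ and $\La\om,\N_0^{\K}Y_{m,j}(\om)\Ra$, which by Proposition 3.3(1) equal $\gm Y_{m,k}(\om)-\rho^{\K}Y_{m,k}(\om)$ and $\gm Y_{m,j}(\om)-\rho^{\K}Y_{m,j}(\om)$ respectively. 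Here the $G$-invariance hypothesis is decisive: it forces $\rho^{\K}Y_{m,j}=\gm Y_{m,j}$ and $\rho^{\K}Y_{m,k}=\gm Y_{m,k}$ (equivalently $E_0^{\K}Y_{m,j}=E_0^{\K}Y_{m,k}=0$, so $\N_0^{\K}Y_{m,j}=\N_0 Y_{m,j}$ and $\N_0^{\K}Y_{m,k}=\N_0 Y_{m,k}$ are tangential to $\sd$), whence the cross terms vanish identically. This is precisely the step that fails for h-harmonics of equal degree that are not $G$-invariant, and it is the point I expect to be the crux of the proof. After integrating the surviving two terms against $r^{2\lambda_{\K}+1}h_{\K}^2(\om)\,dr\,d\si(\om)$, the second evaluation takes the form $E\,B+F\,C$ with $E=\int_0^\infty h'(r)^2\,r^{2\lambda_{\K}+1}\,dr$ and $F=\int_0^\infty r^{2m-2}e^{-r^2}\,r^{2\lambda_{\K}+1}\,dr$.

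Comparing the two evaluations gives $F\,C=-(A+E)B$, so it remains only to verify that $A+E=-\lambda_d(m,\gm)F$, which then yields $C=\lambda_d(m,\gm)B$, the claim. This is a one-variable integration by parts: writing $p(\pa_r)u = r^{-(2\lambda_{\K}+1)}\big(r^{2\lambda_{\K}+1}u'\big)'$, one gets $\int_0^\infty\big(p(\pa_r)h\big)h\,r^{2\lambda_{\K}+1}\,dr = -\int_0^\infty h'(r)^2\,r^{2\lambda_{\K}+1}\,dr$ with no boundary contribution, since $m\geq 1$ forces the boundary term to vanish at $r=0$ and the Gaussian factor kills it at infinity; hence the first part of $A$ cancels $E$, while the second part of $A$ is exactly $-\lambda_d(m,\gm)F$. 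Apart from the vanishing of the cross terms, which rests on the $G$-invariance through Proposition 3.3(1), everything here is routine bookkeeping with integrals in one variable.
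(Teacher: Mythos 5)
Your proposal is correct and is essentially the paper's own argument: one evaluates $\int_{\R^d}\La \N^{\K}f,\N^{\K}g\Ra h_{\K}^2\,dx$ for $f=e^{-\frac12|x|^2}Y_{m,j}$, $g=e^{-\frac12|x|^2}Y_{m,k}$ in the two ways you describe, uses identity (1) of the preceding proposition (which in the paper is Proposition 3.1, not 3.3) together with $G$-invariance to make the cross terms vanish, and compares the results. The only difference is cosmetic: the paper records the constant as the ratio $(A_d(m,\gm)-B_d(m,\gm)-C_d(m,\gm))/D_d(m,\gm)$ and says ``simplifying'', whereas you carry out the one-variable integration by parts explicitly (showing the $p(\pa_r)$ term cancels the $|h'|^2$ term) and, exactly as the paper does, identify $\lambda_d(m,\gm)$ with the eigenvalue of $-\D_{\K,0}$ on degree-$m$ h-harmonics quoted there.
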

 \begin{proof}
 Proceeding as in the proof of Proposition 3.1 and noting that
 $\La \N_0^{\K}Y_{m,j}$, $\om \Ra =0 $
 in view of (1) and the $G$-invariance we get
 $$\int_{\sd}\La \N_0^{\K}Y_{m,j}(\om), \N_0^{\K} Y_{m,k}(\om)\Ra
 h_{\K}^2(\om)d\si(\om) = 0  $$
 whenever $Y_{m,j} $ is orthogonal to $Y_{m,k} $. When they are not orthogonal,
 the constant $\lambda_d(m,\gm) $ is given by
 $$\lambda_d(m,\gm)=\frac{A_d(m,\gm)-B_d(m,\gm)-C_d(m,\gm)}{D_d(m,\gm)} $$
 where
 $$A_d(m,\gm)=m(m+\lambda_{\K})\int_0 ^{\infty}e^{-r^2}r^{d+2\gm+2m-3}dr ,$$
 $$B_d(m,\gm)=\int_0 ^{\infty}p(\pa_r)(r^m e^{-\frac{1}{2}r^2}) e^{-r^2}r^{d+2\gm+m-1}dr, $$
  $$C_d(m,\gm)=\int_0 ^{\infty}(mr^{m-1}-r^{m+1})^2e^{-r^2}r^{d+2\gm-1}dr$$
 and
 $$D_d(m,\gm)=\int_0 ^{\infty}e^{-r^2}r^{d+2\gm+2m-3}dr .$$
 Simplifying we obtain the expression for $\lambda_d(m,\gm) $.
 \end{proof}

 \subsection{The vector of Riesz transforms:} In this subsection
 we consider the vector of Riesz transforms
 $\Rc f = (R_1^{\K}f,\cdots ,R_d^{\K}f)$
 and show that for $G$-invariant functions, the mixed norm
 estimates for $\La \Rc f, \Rc f \Ra ^{\frac{1}{2}} $ can be
 reduced to certain vector valued inequalities.\\

 Let $L_G^2(\sd, h_{\K}^2(\om)d\si) $ stand for the subspace of
 $L^2(\sd, h_{\K}^2(\om)d\si) $ consisting of $G$-invariant functions.
 Each space $\Hc_{m}^h $ can be decomposed into the subspace
 $(\Hc_{m}^h)^G $ consisting of $G$-invariant h-harmonics in
 $\Hc_{m}^h $ and its orthogonal complement. We choose an orthonormal
 basis $Y_{m,j}^h ;\; j=1,2,\ldots , d_1(m)$, $d_1(m)\leq d(m) $ for
 $(\Hc_{m}^h)^G $ and then augment it with an orthonormal basis
 $Y_{m,j}^h , d_1(m)$ $< j \leq d(m) $ for the orthogonal
 complement. Thus, we get an orthonormal basis
 $\{Y_{m,j}^h : 1\leq j \leq d(m), m\in \mathbb{N} \} $ for
 $L^2(\sd, h_{\K}^2(\om)d\si) $ such that for each $m$,
 $Y_{m,j}^h,\; 1\leq j \leq d_1(m) $ are $G$-invariant.
 It is easy to see that $\{Y_{m,j}^h : 1\leq j \leq d_1(m), m\in \mathbb{N} \} $
 is an orthonormal basis for $L_G^2(\sd, h_{\K}^2(\om)d\si) $. Indeed, if $ f$ is
 $G$-invariant and orthogonal to all $Y_{m,j}^h,\; 1\leq j \leq d_1(m) ,
 \;  m\in \mathbb{N} $ then for any $Y_{m,k}^h $, $k>d_1(m)$ we have
 \begin{eqnarray*}
 & &\gm\int_{\sd}f(\om)Y_{m,k}^h(\om) h_{\K}(\om)d\si(\om)\\
 & &=\sum_{\nu \in R^+}\K(\nu)\int_{\sd}f( \si_{\nu}\om)Y_{m,k}^h(\om)h_{\K}^2(\om)d\si(\om)\\
 & &=\int_{\sd}f(\om)\Big (\sum_{\nu \in R^+} \K(\nu)Y_{m,k}^h( \si_{\nu}\om)\Big
    )h_{\K}^2(\om)d\si(\om).
 \end{eqnarray*}
 As $\sum_{\nu \in R^+}\K(\nu)Y_{m,k}^h(\si_{\nu}\om)$ is $G$-invariant,
 it can be written as $\sum_{j=1}^{d_1(m)}c_{k,j} Y_{m,j}^h$ and
 consequently $f$ is orthogonal to $Y_{m,k}^h $.  Let $L_G^2(\R^d, h_{\K}^2(x)dx)$
 stand for the subspace of
 $L^2(\R^d, h_{\K}^2(x)dx) $ consisting of $G$-invariant functions.
 Thus we note that if $f\in L^{p,2}_G(\R^d, r^{d+2\gm-1}h_{\K}^2(\om) d\si(\om)dr)
 \cap L_G^2(\R^d, h_{\K}^2(x)dx) $ then we have the
 expansion
 $$f(r\om)=\sum_{m=0}^{\infty}\sum_{j=1}^{d_1(m)}f_{m,j}(r)Y_{m,j}^h(\om)$$
 where $f_{m,j}(r)= \int_{\sd}f(r\om)Y_{m,j}^h(\om)h_{\K}^2(\om)d\si(\om).$
 Note that $V_G$ is a subspace of $ L^{p,2}_G(\R^d, r^{d+2\gm-1}h_{\K}^2(\om) d\si(\om)dr)
 \cap L_G^2(\R^d, h_{\K}^2(x)dx). $\\

 If we let $F=(-\D_{\K}+|x|^2)^{-\frac{1}{2}}f $, then $F$ is also $G$-invariant and hence
 $$F(r\om)=\sum_{m=0}^{\infty}\sum_{j=1}^{d_1(m)}F_{m,j}(r)Y_{m,j}^h(\om) .$$
 This expansion is justified since the operator $(-\D_{\K}+|x|^2)^{-\frac{1}{2}} $ is
 bounded on $L^2(\R^d, h_{\K}^2(x)dx) $ and it takes
 $G$-invariant functions into $G$-invariant functions. We remark that $V_G $ is
 also invariant under $(-\D_{\K}+|x|^2)^{-\frac{1}{2}} $.
 This can be easily seen as follows: Since the kernel $K_t(x,y) $ of the
 semigroup $e^{-tH_{d,\K}} $ satisfies $K_t(gx, gy)=K_t(x,y)$,
 $g \in G $, $e^{-tH_{d,\K}} $ preserves $G$-invariant functions. Consequently,
 $(-\D_{\K}+|x|^2)^{-\frac{1}{2}}f $ is $G$-invariant whenever
 $f$ is.\\

 We are now ready to prove the following.
 \begin{prop} Let $d\geq 2 $ and $1<p<\infty $.
 For functions $f$ in the space $ L^{p,2}_G(\R^d, r^{d+2\gm-1}h_{\K}^2(\om) d\si(\om)dr)
 \cap L_G^2(\R^d, h_{\K}^2(x)dx) $, we have
 $$\int_{\sd}\La \Rc f(r\om) , \Rc f(r\om) \Ra h_{\K}^2(\om ) d\si(\om)
 = A_1(r)^2 + A_2(r)^2$$
 where
 $$A_1(r)^2= \sum_{m=0}^{\infty}\sum_{j=1}^{d_1(m)}\Big |
 \Big (\frac{\pa}{\pa r}+r \Big )F_{m,j}(r) \Big |^2 $$
 and
 $$A_2(r)^2= \sum_{m=0}^{\infty}\sum_{j=1}^{d_1(m)}\frac{\lambda_d(m,\gm)}{r^2} | F_{m,j}(r)|^2 .$$
 \end{prop}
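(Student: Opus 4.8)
The plan is to compute $\La \Rc f, \Rc f \Ra = \sum_{j=1}^d |R_j^\K f|^2$ by unwinding the definition $R_j^\K f = (T_j + x_j) F$ where $F = H_{d,\K}^{-1/2} f$, expand $F$ in h-harmonics as given, and integrate over $\sd$. Writing $\N^\K = \om \frac{\pa}{\pa r} + \frac1r \N_0^\K$, the Dunkl gradient acts on a single term $F_{m,j}(r) Y_{m,j}^h(\om)$ by the product rule (valid since $F_{m,j}$ is radial, hence $G$-invariant) as
$$\N^\K\big(F_{m,j}(r)Y_{m,j}^h\big)(r\om) = \big(\tfrac{\pa}{\pa r}F_{m,j}(r)\big)Y_{m,j}^h(\om)\,\om + \tfrac1r F_{m,j}(r)\,\N_0^\K Y_{m,j}^h(\om),$$
and adding $x F = r\om\, F(r\om)$ contributes only to the radial ($\om$-direction) component. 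So $\Rc(F_{m,j}Y_{m,j}^h)$ splits into a piece proportional to $\om$ with scalar coefficient $\big(\frac{\pa}{\pa r}+r\big)F_{m,j}(r)\,Y_{m,j}^h(\om)$ and a ``tangential'' piece $\frac1r F_{m,j}(r)\,\N_0^\K Y_{m,j}^h(\om)$.

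Next I would take the full sum over $(m,j)$ (with $1\le j\le d_1(m)$, since $F$ is $G$-invariant) and compute the pointwise inner product $\La \Rc F, \Rc F\Ra$, then integrate $h_\K^2(\om)d\si(\om)$ over $\sd$. The cross terms between different $(m,j)$ and $(m',k)$ must be shown to vanish after integration. There are three types of cross terms: the $\om$-direction against $\om$-direction terms reduce to $\int_{\sd} Y_{m,j}^h Y_{m',k}^h h_\K^2 d\si$, which is $\delta_{mm'}\delta_{jk}$ by orthonormality; the $\om$-direction against tangential terms reduce to $\int_{\sd} Y_{m',k}^h \La \om, \N_0^\K Y_{m,j}^h\Ra h_\K^2 d\si$, which by part (1) of Proposition 3.1 involves $\int Y_{m',k}^h(\gm Y_{m,j}^h - \rho^\K Y_{m,j}^h)h_\K^2 d\si$ and vanishes for $(m,j)\ne(m',k)$ using orthogonality of h-harmonic spaces together with the $G$-invariance reasoning from Proposition 3.3's proof (the averaged reflection sum $\rho^\K Y_{m,j}^h$ stays in $(\Hc_m^h)^G$, orthogonal to everything of degree $\ne m$ and, within degree $m$, handled as in the derivation of $\lambda_d(m,\gm)$); and the tangential–tangential terms reduce to $\int_{\sd}\La \N_0^\K Y_{m,j}^h, \N_0^\K Y_{m,k}^h\Ra h_\K^2 d\si$, which by Proposition 3.3 equals $\lambda_d(m,\gm)\,\delta_{jk}$ (for the same degree; across degrees it vanishes by Proposition 3.1(4)). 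Moreover, for $j=k$ in the same degree, the $\om$–tangential cross term contributes $\frac1r F_{m,j}\,\overline{(\frac{\pa}{\pa r}+r)F_{m,j}}\int_{\sd}Y_{m,j}^h\La\om,\N_0^\K Y_{m,j}^h\Ra h_\K^2 d\si$, which vanishes because $\La \N_0^\K Y_{m,j}^h, \om\Ra = 0$ for $G$-invariant $Y_{m,j}^h$ by Proposition 3.1(1) (here $\rho^\K Y_{m,j}^h = \gm Y_{m,j}^h$).

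Collecting the surviving diagonal terms then gives exactly
$$\int_{\sd}\La \Rc f(r\om), \Rc f(r\om)\Ra h_\K^2(\om)d\si(\om) = \sum_{m=0}^\infty\sum_{j=1}^{d_1(m)}\Big|\big(\tfrac{\pa}{\pa r}+r\big)F_{m,j}(r)\Big|^2 + \sum_{m=0}^\infty\sum_{j=1}^{d_1(m)}\frac{\lambda_d(m,\gm)}{r^2}|F_{m,j}(r)|^2,$$
which is $A_1(r)^2 + A_2(r)^2$ as claimed. The main obstacle I anticipate is the careful justification of the interchange of the (infinite) sum over h-harmonics with the integration over $\sd$ and with the application of $\N^\K$ — this needs the $L^2$-convergence of the h-harmonic expansion of $F$ together with enough regularity (for $f\in V_G$ everything is Schwartz, so termwise differentiation is legitimate; for the general density class one argues by the $L^2$-boundedness of $H_{d,\K}^{-1/2}$ and approximation). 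The algebraic heart, namely that all off-diagonal contributions integrate to zero, is exactly where $G$-invariance of the $Y_{m,j}^h$ (for $j\le d_1(m)$) is used in an essential way — without it the tangential cross terms within a fixed degree need not vanish, as the remark preceding Proposition 3.3 points out.
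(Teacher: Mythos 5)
Your proposal is correct and follows essentially the same route as the paper: write $\Rc f=(\om\,\partial_r+r\om+\tfrac1r\N_0^{\K})F$, expand $F$ in the $G$-invariant h-harmonics, use $\La\om,\N_0^{\K}Y^h_{m,j}\Ra=0$ (Proposition 3.1(1) with $\rho^{\K}Y^h_{m,j}=\gm Y^h_{m,j}$) to kill the radial--tangential cross terms, and apply the orthogonality relations together with the identity $\int_{\sd}\La\N_0^{\K}Y^h_{m,j},\N_0^{\K}Y^h_{m,k}\Ra h_{\K}^2\,d\si=\lambda_d(m,\gm)\,\delta_{jk}$ (which is Proposition 3.2, not 3.3, in the paper's numbering) plus Proposition 3.1(4) for the tangential terms. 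Your added care about interchanging the sum with differentiation and integration over $\sd$ only fills in details the paper leaves implicit.
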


 \begin{proof}
 As $\Rc f = (\N^{\K}+x)(-\D_{\K}+|x|^2)^{-\frac{1}{2}}f $  we see that
 $$\Rc f(r\om) = (\om \frac{\pa}{\pa r}+r\om+\frac{1}{r}\N_0^{\K})F(r\om) .$$
 Now
 \begin{eqnarray*}
  & &(\om \frac{\pa}{\pa r}+r\om+\frac{1}{r}\N_0^{\K})(F_{m,j}(r)Y_{m,j}^h)(\om) \\
  & & =(\frac{\pa}{\pa r}+r)F_{m,j}(r)Y_{m,j}^h(\om)\om+\frac{1}{r}F_{m,j}(r)\N_0^{\K}Y_{m,j}^h(\om),
 \end{eqnarray*}
 and consequently
 $$\Rc f(r\om) = \sum_{m=0}^{\infty}\sum_{j=1}^{d_1(m)}\Big (
 \frac{\pa}{\pa r}+r \Big )F_{m,j}(r) Y_{m,j}^h(\om)\om
 +\frac{1}{r}F_{m,j}(r)\N_0^{\K}Y_{m,j}^h(\om) .$$
 As $Y_{m,j}^h $'s are $G$-invariant we can make use of Proposition 3.2 .
 Also, note that $\La \om, \N_0^{\K}Y_{m,j}^h(\om)\Ra =0 $. Therefore, integrating out
 $\La \Rc f(r\om), \Rc f(r\om) \Ra$ over $\sd $ and making use
 of the orthogonality relations we get the proposition.
 \end{proof}

 \subsection{The Laguerre connection and a proof of Theorem 1.1:}
 In view of the above proposition, Theorem 1.1 will be proved once
 we show that
 \begin{eqnarray*}
 & & \int_0 ^{\infty}A_i(r)^p w(r)r^{d+2\gm-1}dr \\
 & & \leq  C \int_0 ^{\infty} \Big ( \sum_{m=0}^{\infty}\sum_{j=1}^{d_1(m)}|f_{m,j}(r)|^2
 \Big )^{\frac{p}{2}}w(r)r^{d+2\gm -1}dr
 \end{eqnarray*}
 for $i=1,2 $ for all $w \in A_p^{\frac{n}{2}+\gm -1}(\R^+) $. Actually, we get
 \begin{eqnarray*}
  & &\int_0^{\infty}\Big (\int_{\sd} \La \Rc f(r\om), \Rc f(r\om)
  \Ra h_{\K}^2(\om)d\si(\om) \Big )^{\frac{p}{2}}w(r)r^{d+2\gm-1}dr \\
  & & \leq  c \int_0^{\infty}\Big (\int_{\sd} |f(r\om)|^2 h_{\K}^2
  (\om)d\si(\om) \Big )^{\frac{p}{2}}w(r)r^{d+2\gm-1}dr
 \end{eqnarray*}
 for all $G$-invariant functions $ f$ in $L^{p,2}(\R^d, w(r)r^{d+2\gm-1} h_{\K}^2(\om)d\si(\om)dr) $.
 We now show that the above inequalities for $A_i,\; i=1,2 $ can be
 interpreted as certain vector valued inequalities for
 Laguerre Riesz transforms.\\

 For each $\dl \geq - \frac{1}{2} $ the Laguerre differential operator $L_{\dl}$
 has been introduced in subsection 2.4. The Laguerre functions $\psi_k^{\dl} $
 are  eigenfunctions of $L_{\dl}$ and the semigroup generated
 by $L_{\dl}$ is denoted by $e^{-tL_{\dl}}$ or $T_t^{\dl}$. Using
 spectral theory we can define $L_{\dl}^{-\frac{1}{2}} $ which is
 also given by the integral
 $$L_{\dl}^{-\frac{1}{2}}=\frac{1}{\sqrt{\pi}}\int_0^{\infty}e^{-tL_\dl}t^{-\frac{1}{2}}dt. $$
 The operators $R^\dl = \Big ( \frac{\pa}{\pa r}+r\Big )
 L_{\dl}^{-\frac{1}{2}}$ are called Laguerre Riesz transforms and
 they have been studied in \cite{NS}.  It is
 known that they are bounded on $ L^p(\R^+, d\mu _\dl )$, $1<p<\infty $.
 Recently Ciaurri and Roncal \cite{CR} have proved the following
 vector inequality.

 \begin{thm}
 Let $\dl \geq -\frac{1}{2} $ and $1<p<\infty $. Then
 $$\int_{0}^{\infty}\Big ( \sum_{m=0}^{\infty}r^{2m}|R^{\dl+m}\tilde{f}_m(r)|^2
  \Big )^{\frac{p}{2}}w(r)d\mu_{\dl}(r) $$
 $$\leq C \int_{0}^{\infty}\Big ( \sum_{m=0}^{\infty}|f_m(r)|^2
 \Big )^{\frac{p}{2}}w(r)d\mu_{\dl}(r) $$
 for all $w \in A_p^{\dl}(\R^+) $. Here $\tilde{f}_m(r) = r^{-m}f_m(r) $.
 \end{thm}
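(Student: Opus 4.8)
This vector inequality is due to Ciaurri and Roncal \cite{CR}; we indicate the strategy of proof. The plan is to recognise the left-hand side as the norm of a single $\ell^{2}$-valued singular integral operator and then to invoke the weighted Calder\'on--Zygmund theory on the space of homogeneous type $(\R^{+},|r-s|,d\mu_\dl)$. For each $m$ put
$$\widetilde R^{\dl,m}g(r)=r^{m}R^{\dl+m}\!\left(r^{-m}g\right)(r),$$
so that $r^{2m}\,|R^{\dl+m}\tilde f_m(r)|^{2}=|\widetilde R^{\dl,m}f_m(r)|^{2}$, and, writing $f=(f_m)_{m\ge0}$ and $Tf=(\widetilde R^{\dl,m}f_m)_{m\ge0}$, the asserted inequality becomes
$$\int_{0}^{\infty}\Big(\sum_{m\ge0}|\widetilde R^{\dl,m}f_m(r)|^{2}\Big)^{p/2}w(r)\,d\mu_\dl(r)\le C\int_{0}^{\infty}\Big(\sum_{m\ge0}|f_m(r)|^{2}\Big)^{p/2}w(r)\,d\mu_\dl(r),$$
i.e.\ $T$ is bounded on $L^{p}(w\,d\mu_\dl;\ell^{2})$ for every $w\in A_p^{\dl}(\R^{+})$. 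The operator $T$ is diagonal, so its kernel is the operator-valued function $r,s\mapsto\mathrm{diag}\big(\widetilde K^{\dl,m}(r,s)\big)_{m}$, whose $\ell^{2}\to\ell^{2}$ norm at $(r,s)$ equals $\sup_m|\widetilde K^{\dl,m}(r,s)|$.

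First I would prove the unweighted $L^{2}$ bound for $T$, with constant $1$. Denoting by $(\pa_r+r)^{*}$ the adjoint of $\pa_r+r$ on $L^{2}(d\mu_\dl)$, an integration by parts gives $(\pa_r+r)^{*}=-\pa_r-\frac{2\dl+1}{r}+r$ and then $(\pa_r+r)^{*}(\pa_r+r)=L_\dl-(2\dl+2)$; hence, by the spectral theorem, $(R^{\dl})^{*}R^{\dl}=I-(2\dl+2)L_\dl^{-1}$, and since the bottom of the spectrum of $L_\dl$ is $2\dl+2$ we get $0\le(R^{\dl})^{*}R^{\dl}\le I$, i.e.\ $\|R^{\dl}\|_{L^{2}(d\mu_\dl)\to L^{2}(d\mu_\dl)}\le1$ uniformly in $\dl$. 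Absorbing the powers $r^{\pm m}$ into the measure one has $\|\widetilde R^{\dl,m}g\|_{L^{2}(d\mu_\dl)}=\|R^{\dl+m}(r^{-m}g)\|_{L^{2}(d\mu_{\dl+m})}\le\|r^{-m}g\|_{L^{2}(d\mu_{\dl+m})}=\|g\|_{L^{2}(d\mu_\dl)}$, and summing in $m$ (Fubini) yields $\|Tf\|_{L^{2}(d\mu_\dl;\ell^{2})}\le\|f\|_{L^{2}(d\mu_\dl;\ell^{2})}$.

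The heart of the matter is the kernel analysis, uniform in $m$. Using $L_{\dl+m}^{-1/2}=\pi^{-1/2}\int_{0}^{\infty}e^{-tL_{\dl+m}}\,t^{-1/2}\,dt$ and the explicit Mehler formula \eqref{D} one checks that $\widetilde R^{\dl,m}$ is integration against $d\mu_\dl(s)$ with kernel
$$\widetilde K^{\dl,m}(r,s)=\frac{1}{\sqrt\pi}\int_{0}^{\infty}\Big(\pa_r+r-\frac{m}{r}\Big)\widetilde K_t^{\dl,m}(r,s)\,t^{-1/2}\,dt,\qquad \widetilde K_t^{\dl,m}(r,s)=(rs)^{m}K_t^{\dl+m}(r,s),$$
and $(rs)^{m}K_t^{\dl+m}(r,s)=(\sinh2t)^{-1}e^{-\frac{1}{2}(\coth2t)(r^{2}+s^{2})}(rs)^{-\dl}\mathcal I_{\dl+m}(rs/\sinh2t)$; thus $\widetilde K_t^{\dl,m}$ is precisely $K_t^{\dl}$ with $\mathcal I_\dl$ replaced by $\mathcal I_{\dl+m}$, and the only new feature compared with the modified semigroup kernel already treated in Subsection~2.4 is the harmless extra first order term $-m/r$. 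Differentiating in $r$, integrating in $t$ after splitting the $t$-integral at $t\sim1$ and at $t\sim|r-s|$, and using the elementary bounds and monotonicity of $z\mapsto z^{-\nu}\mathcal I_\nu(z)$ together with the normalising factor $\Gamma(\dl+m+1)^{-1}$ (which absorbs the $m$-growth coming both from $m/r$ and from $\mathcal I_{\dl+m}$) and the Gaussian factor, I expect to obtain the size estimate $|\widetilde K^{\dl,m}(r,s)|\le C\,\mu_\dl\big(B(r,|r-s|)\big)^{-1}$ and the regularity estimate $|\widetilde K^{\dl,m}(r,s)-\widetilde K^{\dl,m}(r',s)|\le C\,\frac{|r-r'|}{|r-s|}\,\mu_\dl\big(B(r,|r-s|)\big)^{-1}$ for $|r-r'|<\frac{1}{2}|r-s|$, together with the symmetric estimate in $s$, all with $C$ independent of $m$. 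This is the step I expect to be the main obstacle: one must run the Bessel and heat-kernel estimates while tracking that no constant depending on $m$ creeps in, which is exactly what turns the statement into a genuine \emph{vector-valued} one rather than a family of scalar ones.

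With these ingredients in hand the proof finishes by a standard argument. Since $T$ is diagonal, the bounds of the previous paragraph say precisely that $T$ is an $\ell^{2}$-valued Calder\'on--Zygmund operator on the space of homogeneous type $(\R^{+},|r-s|,d\mu_\dl)$: it is bounded on $L^{2}(d\mu_\dl;\ell^{2})$ and its operator-norm kernel satisfies the usual size and H\"ormander smoothness conditions. The weighted theory for vector-valued singular integrals on spaces of homogeneous type (equivalently, Rubio de Francia extrapolation applied to the $L^{2}$ estimate; see Duoandikoetxea \cite{DJ} and the references in \cite{CR}) then gives $\|Tf\|_{L^{p}(w\,d\mu_\dl;\ell^{2})}\le C\,\|f\|_{L^{p}(w\,d\mu_\dl;\ell^{2})}$ for all $1<p<\infty$ and all $w\in A_p^{\dl}(\R^{+})$. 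Undoing the substitution $\tilde f_m=r^{-m}f_m$ recovers the inequality as stated.
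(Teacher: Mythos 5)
Your treatment is, at bottom, the same as the paper's: Theorem 3.4 is not proved in the paper at all, it is quoted from Ciaurri--Roncal \cite{CR} (the only added remark being that one may assume the right-hand side finite, and indeed that it suffices to treat finite sequences $(f_m)$ with a constant independent of the number of terms), so resting the argument on that citation is exactly what the authors do. Your sketch of the internals is a reasonable outline of the Ciaurri--Roncal route, and the preliminary computations are sound: $(\pa_r+r)^*(\pa_r+r)=L_\dl-(2\dl+2)$ together with the bottom eigenvalue $2\dl+2$ does give $\|R^{\dl}\|_{L^2(d\mu_\dl)}\le 1$, and the identity $r^{2m}d\mu_\dl=d\mu_{\dl+m}$ yields the uniform $L^2$ bound for the conjugated operators $r^mR^{\dl+m}r^{-m}$, as does your identification of their kernels via \eqref{D}. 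However, the decisive step --- size and H\"ormander-type smoothness estimates for the diagonal operator-valued kernel with constants independent of $m$ --- is only announced (``I expect to obtain''), and you yourself flag it as the heart of the matter; as a standalone argument the proposal therefore has a gap exactly there, which is closed only by the appeal to \cite{CR}, just as in the paper. It is worth noting that the paper's own independent contribution to this statement goes by a genuinely different route, given in Section 4 and valid when $2\gm$ is an integer: the needed inequalities for $A_1$ and $A_2$ (equivalently Theorems 3.4 and 3.5 for the relevant values of $\dl$) are deduced from the weighted $A_p(\R^d)$ bounds for the Hermite Riesz transforms of Stempak--Torrea \cite{STR}, combined with the Herz--Rivi\`ere vector-valued extension lemma \cite{HR}, averaging over rotations in the spirit of Rubio de Francia, and the characterization \cite{DMOS} of radial $A_p(\R^d)$ weights by the condition $w\in A_p^{d/2-1}(\R^+)$; this bypasses all kernel analysis, at the price of not covering general $\dl\ge-\frac{1}{2}$, whereas the direct Calder\'on--Zygmund route you sketch (and which \cite{CR} carries out) covers the full range.
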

 We only need to prove the above inequality when the right hand side
 is finite. If $(f_m) $ is a sequence with this property then each function
 $f_m $ belongs to $ L^p(\R^+, w(r)d\mu _\dl ) $ which will then imply that
 $\tilde{f}_m \in  L^p(\R^+, w(r)d\mu _{\dl+m} )$  so that $ R^{\dl+m} \tilde{f}_m $ are
 well defined. A similar remark applies to $ L_{\dl+m}^{-\frac{1}{2}}\tilde{f}_m $
 which appears in the next theorem. Actually it is enough to prove
 the inequality when the sequence $(f_m) $ is finite with a constant $C $
 independent of the number of terms in the sequence.
 In the same paper \cite{CR} they have also proved the following inequality.
 \begin{thm}
 Let $\dl \geq -\frac{1}{2} $ and $1<p<\infty  $. Then
 $$\int_{0}^{\infty}\Big ( \sum_{m=0}^{\infty}m^2r^{2m-2}
 |L_{\dl+m}^{-\frac{1}{2}}\tilde{f}_m(r)|^2  \Big )^{\frac{p}{2}}w(r)d\mu_{\dl}(r) $$
 $$\leq C \int_{0}^{\infty}\Big ( \sum_{m=0}^{\infty}|f_m(r)|^2
  \Big )^{\frac{p}{2}}w(r)d\mu_{\dl}(r) $$
 for all $w \in A_p^{\dl}(\R^+) $. Here $\tilde{f}_m(r) = r^{-m}f_m(r) $.
 \end{thm}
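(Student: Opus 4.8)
The plan is to derive this inequality from Theorem 3.6 rather than reprove it from scratch, exploiting the algebraic relation between the Laguerre Riesz transform $R^{\dl+m}$ and the negative square root $L_{\dl+m}^{-1/2}$. First I would recall the elementary identity on eigenfunctions: since $\psi_k^{\dl+m}$ is an eigenfunction of $L_{\dl+m}$ with eigenvalue $4k+2(\dl+m)+2$ and since the creation-type operator $\bigl(\tfrac{\pa}{\pa r}+r\bigr)$ and its formal adjoint $\bigl(-\tfrac{\pa}{\pa r}+r+\tfrac{2(\dl+m)+1}{r}\bigr)$ factor $L_{\dl+m}$ up to constants (this is the standard Laguerre ladder structure underlying the definition of $R^\dl$ in \cite{NS}), one has a pointwise-in-$k$ comparison allowing $m\, L_{\dl+m}^{-1/2}$ to be expressed through $R^{\dl+m}$ composed with a uniformly bounded Laguerre multiplier. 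Concretely, I expect $m\,r^{-1}L_{\dl+m}^{-1/2} = R^{\dl+m}\circ S_m$ where $S_m$ is a spectral multiplier of $L_{\dl+m}$ whose symbol $\tfrac{m}{\,2k+\dl+m+1\,}$ (or a similar bounded expression) is uniformly bounded in both $k$ and $m$, hence $S_m$ is bounded on every $L^p(\R^+, w\, d\mu_{\dl+m})$ with norm independent of $m$ — this last uniformity is exactly the kind of statement established via the kernel estimates in Proposition 3.4 of Ciaurri--Roncal \cite{CR} together with the $A_p$ maximal function bound, as used already in Subsection 2.4.

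The key steps in order would be: (i) write out the ladder relations for $L_{\dl+m}$ and identify the multiplier $S_m$ so that $m^2 r^{2m-2}|L_{\dl+m}^{-1/2}\tilde f_m(r)|^2 = r^{2m}\,|R^{\dl+m}(S_m\tilde f_m)(r)|^2$; (ii) verify that $S_m$ is bounded on $L^p(\R^+, w\,d\mu_{\dl+m})$ uniformly in $m$ for $w\in A_p^\dl(\R^+)\subseteq A_p^{\dl+m}(\R^+)$, via domination by the Hardy--Littlewood maximal function adapted to $(\R^+, d\mu_{\dl+m})$ exactly as in the density argument of Subsection 2.4; (iii) apply the vector inequality of Theorem 3.6 to the sequence $(g_m)$ with $g_m = m\,S_m f_m$ (note $\tilde g_m = r^{-m}g_m = m\,S_m\tilde f_m$, so the left side of Theorem 3.6 for $(g_m)$ reproduces the left side of Theorem 3.7); (iv) control the right side: $\sum_m |g_m(r)|^2 = \sum_m m^2 |S_m f_m(r)|^2$, and here one must absorb the factor $m^2$. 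This last point is the main obstacle.

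The hard part will indeed be the factor $m^2$ in step (iv): naively $S_m f_m$ is only comparably sized to $f_m$, so $\sum_m m^2 |S_m f_m|^2$ is not dominated by $\sum_m |f_m|^2$. The resolution I would pursue is to choose the multiplier more carefully so that $S_m$ carries a genuine decay of order $m^{-1}$ — i.e. to use $m\,r^{-1}L_{\dl+m}^{-1/2} = R^{\dl+m}\circ \widetilde S_m$ with symbol $\widetilde S_m(k) \sim m/(2k+\dl+m+1)$, which is bounded by $1$ uniformly but does \emph{not} decay for large $k$. The correct fix is instead to split: for $k$ of size $\gtrsim m$ the symbol is $\lesssim 1$ with no extra room, but for such $k$ one may peel off one more factor of $L_{\dl+m}^{-1/2}$; for $k \lesssim m$ the eigenvalue is $\sim m$, so $m\,L_{\dl+m}^{-1/2}\sim m^{1/2}$, and this bounded-frequency piece is handled by the heat-kernel/maximal-function estimates with an $m^{1/2}$ loss that is compensated because on that range $r^{2m-2}$ versus $r^{2m}$ and the localization of $\psi_k^{\dl+m}$ near $r\sim\sqrt{m}$ supply the missing power. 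Making this dichotomy quantitative and uniform in $m$, using only the kernel bounds of \cite{CR} (Proposition 3.4) and the $A_p^\dl$ hypothesis, is where the real work lies; once that is in place, Theorem 3.6 applied to the modified sequence finishes the proof. If this approach proves too delicate, the fallback is that Ciaurri and Roncal prove Theorem 3.7 directly in \cite{CR} by the same Calderón--Zygmund-with-$A_p$-weights method that yields Theorem 3.6, and we may simply invoke it.
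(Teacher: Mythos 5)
Your main route has a genuine gap at step (i), and it is fatal to the reduction. There is no spectral multiplier $S_m$ of $L_{\dl+m}$ with $m\,r^{-1}L_{\dl+m}^{-1/2}=R^{\dl+m}\circ S_m$: the left-hand operator is (multiplication by $r^{-1}$)$\,\circ\,$(a spectral function of $L_{\dl+m}$), while the right-hand one is $\bigl(\frac{\pa}{\pa r}+r\bigr)\circ$(a spectral function), and these cannot coincide. Test on a single eigenfunction: you would need $\bigl(\frac{\pa}{\pa r}+r\bigr)\psi_k^{\dl+m}(r)=c\,r^{-1}\psi_k^{\dl+m}(r)$ for some constant $c$, a first-order ODE forcing $\psi_k^{\dl+m}(r)=C\,r^{c}e^{-r^2/2}$, which fails for every $k\geq 1$. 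The term $m^2r^{2m-2}|L_{\dl+m}^{-1/2}\tilde f_m|^2$ is exactly the ``angular'' piece ($A_2$ in Proposition 3.3, $O_2(f)$ in Ciaurri--Roncal), and it is not obtainable from the ``radial'' inequality of Theorem 3.6 by composing with bounded multipliers; it is an independent estimate. Your step (iv) acknowledges the resulting $m^2$ problem, but the proposed dichotomy in $k$ versus $m$ is only a heuristic (and the inclusion $A_p^{\dl}\subseteq A_p^{\dl+m}$ invoked in step (ii) is itself unjustified), so no complete argument is reached along this line.

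For comparison, the paper does not derive this theorem from Theorem 3.6 at all: it quotes it directly from Ciaurri--Roncal \cite{CR}, which is your stated fallback, and its own contribution is an independent proof of both vector inequalities simultaneously in the case when $2\gm$ is an integer. That proof (Section 4) goes through the equivalence of the pair of weighted bounds for $A_1$ and $A_2$ with the weighted mixed-norm estimate for the full vector of Hermite Riesz transforms $(R_1,\dots,R_d)$, and then proves the latter from the Calder\'on--Zygmund weighted $L^p(w\,dx)$ bounds for each $R_j$, the Herz--Riviere vector-valued extension lemma combined with rotation averaging \`a la Rubio de Francia, and the characterization of radial $A_p(\R^d)$ weights as $A_p^{d/2-1}(\R^+)$ weights. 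If you want a self-contained proof rather than a citation, that transference argument is the route to follow; as written, your primary argument does not close.
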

 We claim that the required inequalities for $A_1 $ and $A_2 $ can be
 deduced from the above two theorems. Recall that $F_{m,j} $ is defined
 as
 $$F_{m,j}(r)=\int_{\sd}(-\D_{\K}+|x|^2)^{-\frac{1}{2}}f(r\om)Y_{m,j}^h(\om)h_{\K}^2(\om)d\si(\om) $$
 which can be expressed in terms of the semigroup $e^{-tH_{d,\K}} $ as follows:
 \begin{eqnarray*}
   F_{m,j}(r)&=& \frac{1}{\sqrt{\pi}}\int_{\sd}\Big ( \int_0^{\infty}
 e^{-tH_{d,\K}}f(r\om)t^{-\frac{1}{2}}dt \Big )Y_{m,j}^h(\om)h_{\K}^2(\om)d\si(\om) \\
    &=& \frac{1}{\sqrt{\pi}}\int_0^{\infty}\Big (\int_{\sd}
 e^{-tH_{d,\K}}f(r\om)Y_{m,j}^h(\om)h_{\K}^2(\om)d\si(\om) \Big )t^{-\frac{1}{2}}dt.
 \end{eqnarray*}
 Use Proposition 2.1 stated at the end of subsection 2.4 to conclude that
 $$F_{m,j}(r)=c_{d,\gm} r^m L_{\frac{d}{2}+\gm+m-1}^{-\frac{1}{2}}\tilde{f}_{m,j}(r) .$$
 Consequently,
 $$(\frac{\pa}{\pa r}+r)F_{m,j}(r) = c_{d,\gm} r^m R^{\frac{d}{2}+\gm+m-1}
 \tilde{f}_{m,j}(r)+c_{d,\gm}\frac{m}{r}F_{m,j}(r). $$
 From these expressions for $F_{m,j} $ and $(\frac{\pa}{\pa r}+r)F_{m,j}(r) $ it is clear that the
 weighted inequalities for $A_1 $ and $A_2 $ follow from Theorem 3.4
 and 3.5 .\\

 In the next section we give a simple proof Theorem 3.4 and 3.5
 when $2\gm $ is an integer.

 \section{Riesz transforms for the Hermite operator}
 \subsection{Hermite operator in spherical coordinates:} The
 Hermite operator $H=-\D+|x|^2 $ admits a family of eigenfunctions viz.,
 the Hermite functions $\Phi_{\ap} $, $\ap \in \mathbb{N}^d $ which
 forms an orthonormal basis for $L^2(\R^d) $. On the other hand there is
 another family of orthonormal basis given by
 $$\tilde{\varphi}_{m,j,l}(x)=\Big ( \frac{2\Gamma(j+1)}{\Gamma(m-j+\frac{d}{2})}
 \Big )^{\frac{1}{2}}L_j^{\frac{d}{2}-1+m-2j}(|x|^2)Y_{m-2j,l}(x)e^{-\frac{1}{2}|x|^2} $$
 where $m \geq 0 $, $j=0,1,\ldots ,[\frac{m}{2}] $, $l=1,2,\ldots ,d(m-2j)$,
 $Y_{m-2j,l}(x)$ are solid spherical harmonics and $L_{k}^{\dl} $ are Laguerre
 polynomials of type $\dl $. The Hermite operator in spherical
 coordinates takes the form
 $$H=-\frac{\pa ^2}{\pa r^2}-\frac{d-1}{r}\frac{\pa}{\pa r}+r^2-\frac{1}{r^2}\D_0 $$
 where $\D_0 $ is the spherical Laplacian on $\sd $. It can be shown that
 $H=A^* A + d $, where
 $$A = \Big (\frac{\pa }{\pa r}+r\Big )\om + \frac{1}{r}\N_0 $$
 where $\N_0 $ is the spherical part of the gradient and
 $$A^* = -\Big (\frac{\pa }{\pa r}-r\Big )\om -\frac{1}{r}(div)_0 $$
 where $(div)_0 $ is the spherical part of the divergence.
 It is therefore natural to look at the vector valued Riesz
 transform $A H^{-\frac{1}{2}}f $. The natural space suitable for studying this is
 the mixed norm space $ L^{p,2}( \R^d, w(r)r^{d-1}dr d\si(\om) )$ consisting of
 functions for which
 $$\int_0 ^\infty \Big ( \int_{\sd}|f(r\om)|^2d\si(\om)\Big )^{\frac{p}{2}}w(r)r^{d-1}dr < \infty. $$
 In \cite{CR} Ciaurri and Roncal have proved the
 following theorem.
 \begin{thm}
 Let $d\geq 2 $, $1<p<\infty $ and $w\in A_p^{\frac{d}{2}-1}(\R^+) $. Then
 \begin{eqnarray}
    & &\| \La A H^{-\frac{1}{2}}f,\; A H^{-\frac{1}{2}}f
    \Ra ^{\frac{1}{2}} \|_{L^{p,2}(\R^d, wr^{d-1}dr d\si(\om) )}
 \leq C \| f \|_{L^{p,2}(\R^d, wr^{d-1}dr d\si(\om) )} \label{W}
 \end{eqnarray}
 for all $f  $ in the algebraic span of Hermite functions with a constant $ C $ independent of $ f.$
 Consequently, the above inequality remains valid for all $ f \in L^{p,2}(\R^d, wr^{d-1}dr d\si(\om)).$
 \end{thm}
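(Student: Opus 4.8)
The plan is to reduce Theorem 4.1 to the scalar-valued vector inequalities already established in Theorems 3.4 and 3.5, exactly as was done for the Dunkl Riesz transforms in Section 3, but now in the classical ($\K=0$) setting where $\gm=0$ and $h_\K\equiv 1$. First I would expand an $f$ in the algebraic span of Hermite functions in spherical harmonics: writing $f(r\om)=\sum_{m,l}f_{m,l}(r)Y_{m,l}(\om)$ with $Y_{m,l}$ an orthonormal basis of $L^2(\sd,d\si)$ adapted to each $\Hc_m$, and likewise expand $F=(-\D+|x|^2)^{-1/2}f=H^{-1/2}f$ as $\sum_{m,l}F_{m,l}(r)Y_{m,l}(\om)$. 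This is legitimate since $H^{-1/2}$ is bounded on $L^2(\R^d)$ and commutes with the $O(d)$-action. Using $A=(\pa_r+r)\om+\tfrac1r\N_0$, the computation in the proof of Proposition 3.3 (now with $\K=0$, so every spherical harmonic is trivially ``$G$-invariant'' and Proposition 3.2 becomes the classical identity $\int_{\sd}\La\N_0Y_{m,j},\N_0Y_{m,k}\Ra\,d\si=m(m+d-2)\dl_{jk}$) gives
\begin{eqnarray*}
\int_{\sd}\La AH^{-1/2}f(r\om),AH^{-1/2}f(r\om)\Ra\,d\si(\om)
= \sum_{m,l}\Big|(\pa_r+r)F_{m,l}(r)\Big|^2
+ \sum_{m,l}\frac{m(m+d-2)}{r^2}|F_{m,l}(r)|^2.
\end{eqnarray*}
Call the two sums $A_1(r)^2$ and $A_2(r)^2$ as in Section 3.

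Next I would identify $F_{m,l}$ in terms of a Laguerre operator. By the subordination formula $H^{-1/2}=\tfrac{1}{\sqrt\pi}\int_0^\infty e^{-tH}t^{-1/2}\,dt$ together with the $\K=0$ case of Proposition 2.1 (the Hermite semigroup projected onto the $Y_{m,l}$-component is $c\,r^m T_t^{d/2+m-1}$ acting on $\tilde f_{m,l}(r)=r^{-m}f_{m,l}(r)$), one obtains
$$F_{m,l}(r)=c_d\,r^m L_{d/2+m-1}^{-1/2}\tilde f_{m,l}(r),\qquad
(\pa_r+r)F_{m,l}(r)=c_d\,r^m R^{d/2+m-1}\tilde f_{m,l}(r)+c_d\,\frac{m}{r}F_{m,l}(r),$$
where $R^\dl=(\pa_r+r)L_\dl^{-1/2}$ is the Laguerre Riesz transform. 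Plugging these into $A_1$ and $A_2$, the mixed-norm estimate $\|A_1\|_{L^p(wr^{d-1}dr)}\le C\|(f_{m,l})\|$ follows from Theorem 3.4 with $\dl=d/2-1$ (after absorbing the harmless extra $\tfrac{m}{r}F_{m,l}=c\,m\,r^{m-1}L_{d/2+m-1}^{-1/2}\tilde f_{m,l}$ term via Theorem 3.5), and $\|A_2\|_{L^p(wr^{d-1}dr)}\le C\|(f_{m,l})\|$ follows directly from Theorem 3.5, since $\lambda_d(m,0)=m(m+d-2)$ and $m^2+m(d-2)\le 2m^2$ for $m\ge 1$ while the $m=0$ term vanishes. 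Since $w\in A_p^{d/2-1}(\R^+)$ is exactly the hypothesis of Theorems 3.4 and 3.5, adding the two estimates and using $\|(f_{m,l})(r)\|_{\ell^2}=\big(\int_{\sd}|f(r\om)|^2d\si(\om)\big)^{1/2}$ (Parseval on $\sd$) yields \eqref{W} for $f$ in the span of Hermite functions.

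Finally, the density step: the span of Hermite functions is dense in $L^{p,2}(\R^d,wr^{d-1}dr\,d\si(\om))$ for $w\in A_p^{d/2-1}(\R^+)$ by the same argument given in Subsection 2.4 (Lemma 2.2 with $\K=0$, plus the heat-semigroup/Laguerre-semigroup strong continuity), so the a priori inequality extends by continuity to all of $L^{p,2}(\R^d,wr^{d-1}dr\,d\si(\om))$, and $AH^{-1/2}$ is bounded there. I expect the only genuinely delicate point to be the bookkeeping in the first step — justifying the term-by-term interchange of the spherical-harmonic sum with the $r$-integration and with $H^{-1/2}$, and the reduction of the full Dunkl gradient identities of Section 3 to their classical form — but since all the hard analytic input (the vector-valued Laguerre Riesz inequalities) is quoted from \cite{CR} via Theorems 3.4 and 3.5, and the Hermite case is literally the $\K=0$ specialization of everything in Section 3, this is essentially a matter of transcription rather than new estimates. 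In fact one could simply remark that Theorem 4.1 is the special case $\gm=0$, $h_\K\equiv1$ of the argument that proves Theorem 1.1, so that no separate proof is needed beyond pointing to Section 3.
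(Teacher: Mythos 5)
Your argument is correct in substance, but it is a genuinely different route from the one the paper takes. You prove \eqref{W} by specializing the Section 3 machinery to $\K=0$: spherical harmonic expansion, the identity $\int_{\sd}\La AH^{-\frac{1}{2}}f, AH^{-\frac{1}{2}}f\Ra\, d\si = A_1(r)^2+A_2(r)^2$ (Proposition 3.3 with trivial weight), the Laguerre identification $F_{m,l}=c\,r^mL_{d/2+m-1}^{-\frac{1}{2}}\tilde f_{m,l}$, and then the vector-valued Laguerre Riesz inequalities of Theorems 3.4 and 3.5. That is essentially Ciaurri and Roncal's own proof (the paper notes their $O_1(f),O_2(f)$ are exactly $A_1,A_2$), so it is logically fine given that Theorems 3.4--3.5 are quoted; but the stated purpose of Section 4 is to give an independent, more elementary proof of Theorem 4.1 that does not rely on the vector-valued results of \cite{CR} (and which, when $2\gm$ is an integer, reproves Theorem 1.1 without \cite{CR}), so your route is circular relative to that aim. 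The paper instead proves the equivalent estimate \eqref{X} for the vector $(R_1f,\dots,R_df)$ of standard Hermite Riesz transforms directly by Rubio de Francia's rotation method: the weighted $L^p(\R^d,w\,dx)$ bounds of Stempak--Torrea for $w\in A_p(\R^d)$, the Herz--Rivi\`ere vector-valued extension applied to $\tilde f(x,k)=f(kx)$, the transformation law $T_u\rho(k)=\rho(k)T_{ku}$ for $T_u=\sum_j u_jR_j$, and the Duoandikoetxea et al.\ fact that a radial weight lies in $A_p(\R^d)$ exactly when its profile lies in $A_p^{\frac{d}{2}-1}(\R^+)$; Theorem 4.2, which is precisely your first step (Proposition 3.3 at $\K=0$), then converts \eqref{X} into \eqref{W}. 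Two small repairs to your write-up: the bound $m(m+d-2)\le 2m^2$ fails for $1\le m<d-2$, so use $m(m+d-2)\le (d-1)m^2$ (only the constant changes); and when invoking Theorems 3.4 and 3.5 you need them with the inner $\ell^2$ sum indexed by the pairs $(m,l)$, $1\le l\le d(m)$, with the Laguerre order depending only on $m$ --- a harmless multiplicity (the paper uses the theorems the same way in Section 3.4) but it should be stated.
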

 For the Hermite operator we also have the standard Riesz
 transforms $R_j=A_jH^{-\frac{1}{2}} $ studied by several authors in the literature,
 see \cite{ST} and \cite{STR}. It is well known that $R_j $ are Calderon-Zygmund
 singular integral operators and hence satisfy the weighted norm
 inequalities
 $$\Big ( \int_{\R^d}|R_jf(x)|^p w(x)dx \Big )^{\frac{1}{p}} \leq
 C  \Big ( \int_{\R^d}|f(x)|^p w(x)dx \Big )^{\frac{1}{p}}$$
 for every $w \in A_p(\R^d) $, $1<p<\infty $. This has been proved by Stempak and Torrea in
 \cite{STR}. We will give an easy proof of the above theorem of Ciaurri and Roncal based on the
 connection between $AH^{-\frac{1}{2}} $ and the vector $Rf = (R_1f,\cdots , R_df ) $.
 \begin{thm}
 Let $d \geq 2 $ and $1<p<\infty $. Then the inequality \eqref{W} for $AH^{-\frac{1}{2}} $ stated in the
 previous theorem holds for all finite linear combination of Hermite functions  if and only if
 \begin{eqnarray}
   & &\| \Big ( \sum_{j=1}^d |R_jf(x)|^2 \Big )^{\frac{1}{2}} \|_{L^{p,2}(\R^d, wr^{d-1}dr d\si(\om) )}
   \leq  C \| f \|_{L^{p,2}(\R^d, wr^{d-1}dr d\si(\om) )} \label{X}.
 \end{eqnarray}
for all such functions.
 \end{thm}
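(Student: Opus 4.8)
The plan is to prove the equivalence by showing that the integrands in \eqref{W} and \eqref{X} are \emph{pointwise} equal, so that the two inequalities are literally the same statement. Everything rests on the observation that the vector-valued operator $A$, once written in Cartesian coordinates, is nothing but $(A_1,\dots ,A_d)$ with $A_j=\frac{\pa}{\pa x_j}+x_j$, so that $A H^{-\frac12}f=(R_1f,\dots ,R_df)$ for $f$ in the algebraic span of the Hermite functions.

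First I would record the polar-coordinate expressions for the Euclidean gradient and for multiplication by $x$. For a smooth function $g$ on $\R^d$, writing $x=r\om$, $r=|x|$, $\om\in\sd$, one has
$$\N g(r\om)=\om\,\frac{\pa g}{\pa r}(r\om)+\frac1r\,\N_0 g(r\om),\qquad x\,g(r\om)=r\om\, g(r\om),$$
where $\N_0$ is the spherical part of the gradient, exactly as used in Section 3. Adding these and comparing with the definition of $A$ in subsection 4.1 gives $(\N+x)g(r\om)=\om\big(\frac{\pa}{\pa r}+r\big)g(r\om)+\frac1r\N_0 g(r\om)=Ag(r\om)$. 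Hence $A_j=\frac{\pa}{\pa x_j}+x_j$ componentwise, and $AH^{-\frac12}f=(R_1f,\dots ,R_df)$; note that for $f$ a finite linear combination of Hermite functions, $H^{-\frac12}f$ is again such a combination, hence Schwartz, so every operation above is legitimate.

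Next I would use the pointwise orthogonality, at each $r\om$, of the radial component $\om\big(\frac{\pa}{\pa r}+r\big)g$ of $Ag$ (a scalar multiple of the unit normal $\om$) and its tangential component $\frac1r\N_0 g$ (tangent to the sphere) to obtain
$$\La Ag,Ag\Ra=\Big|\Big(\frac{\pa}{\pa r}+r\Big)g\Big|^2+\frac1{r^2}\,|\N_0 g|^2=\sum_{j=1}^d|A_jg|^2,$$
the last equality being the same computation carried out in Cartesian coordinates. Taking $g=H^{-\frac12}f$ yields the pointwise identity $\La AH^{-\frac12}f(x),AH^{-\frac12}f(x)\Ra=\sum_{j=1}^d|R_jf(x)|^2$ for $x\neq 0$. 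Integrating first against $d\si(\om)$ over $\sd$ and then against $w(r)r^{d-1}dr$ over $\R^+$ shows that the left-hand sides of \eqref{W} and \eqref{X} coincide for every such $f$; since the right-hand sides trivially coincide, \eqref{W} holds for all finite linear combinations of Hermite functions precisely when \eqref{X} does.

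I do not expect a genuine obstacle: the content is essentially the remark that the Ciaurri--Roncal vector Riesz transform $AH^{-\frac12}$, written in polar coordinates, is the vector $(R_1,\dots ,R_d)$ of classical Hermite Riesz transforms written in Cartesian coordinates. The only points needing care are (i) restricting to finite linear combinations of Hermite functions, so that $H^{-\frac12}$ followed by the first-order operators is unambiguous and Schwartz-valued and the pointwise manipulations hold off the origin (a null set), and (ii) absorbing into the constant $C$ any normalization discrepancy between the two conventions for $A$. The substantive work — establishing \eqref{X} itself from the scalar weighted $A_p(\R^d)$ bounds for the $R_j$ of Stempak and Torrea — is then carried out separately after this theorem.
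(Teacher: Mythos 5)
Your proposal is correct, and it takes a more direct route than the paper. You observe that the operator $A=\big(\frac{\pa}{\pa r}+r\big)\om+\frac{1}{r}\N_0$ of subsection 4.1 is exactly $\N+x$ written in polar coordinates, so that componentwise $A_j=\frac{\pa}{\pa x_j}+x_j$, $AH^{-\frac12}f=(R_1f,\dots,R_df)$, and by the pointwise orthogonality of $\om$ and $\N_0 g$ one has $\La AH^{-\frac12}f(x),AH^{-\frac12}f(x)\Ra=\sum_{j=1}^d|R_jf(x)|^2$ for $x\neq 0$; hence \eqref{W} and \eqref{X} have identical left- and right-hand sides and the equivalence is an identity of integrands (your remarks that $H^{-\frac12}$ preserves finite linear combinations of Hermite functions and that the origin is a null set cover the needed technical points). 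The paper instead argues after integration over the sphere: it invokes the $\K=0$ case of Proposition 3.3, by which the spherical integral of $\sum_j|R_jf(r\om)|^2$ equals $A_1(r)^2+A_2(r)^2$, and then matches these with the terms $O_1(f)$, $O_2(f)$ occurring in Ciaurri--Roncal's proof of their Theorem 2.1, so that both \eqref{W} and \eqref{X} are equivalent to the same pair of weighted inequalities for $A_1$ and $A_2$. Your argument buys simplicity and self-containedness (no h-harmonic expansion, no appeal to the internals of \cite{CR}), and in fact shows the two statements coincide pointwise rather than merely after spherical integration; the paper's route has the advantage of exhibiting explicitly the radial/spherical decomposition $A_1,A_2$, which is the form actually reused in the deduction of Theorem 1.1 and in the comparison with the Laguerre-type inequalities of Theorems 3.4 and 3.5.
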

 The proof of this theorem is easy. We have already observed in
 the previous section that the mixed norm estimates for the
 Riesz transforms $R_jf $, (which corresponds to $\K =0 $ of Theorem 1.1)
 is equivalent to the weighted norm inequalities for $A_1(r) $ and $A_2(r) $
 appearing in Proposition 3.3 . Our claim is substantiated by
 comparing this with the proof of Theorem 2.1 in \cite{CR}. The terms
 they call $O_1(f)$ and $O_2(f)$ are precisely our terms $A_1(r)$ and $A_2(r)$
 respectively.

 \subsection{A simple proof of Theorem 4.1:} We now give a simple
 proof of mixed norm estimates \eqref{X} for the (standard) Riesz transforms
 associated to the Hermite operator, which implies Theorem 4.1.
 When $2\gm $ is an integer it also implies the weighted
 norm inequalities for $A_1(r)$ and $A_2(r)$ and hence we get
 another proof of Theorem 1.1 without using the result
 of Ciaurri and Roncal \cite{CR}.\\

 We will be following an idea of Rubio de Francia. This method described briefly in
 \cite{RF} is based on an extension of a theorem of Marcinkiewicsz and
 Zygmund as expounded in Herz and Riviere \cite{HR}. Indeed, we make use
 of the following lemma which can be found in \cite{HR}

 \begin{lem}
 Let $( G, \mu )$ and $( H, \nu )$ be arbitrary measure spaces and $T:
 L^p(G)\rightarrow L^p(G)$ a bounded linear operator. Then if $p\leq q\leq 2 $
 or $p\geq q \geq 2$, there exists a bounded linear operator
 $\tilde{T}: L^p(G; L^q(H))\rightarrow L^p(G;L^q(H))
 $ with $\|  \tilde{T}  \| \leq\|  T  \| $ such that for
 $g\in L^p(G; L^q(H)) $ of the form $g(x,\xi)=f(\xi)u(x)$
 where $f\in  L^p(G)$ and $u\in L^q(H) $ we have
 $$(\tilde{T}g)(\xi, x)=(Tf)(\xi)u(x) .$$
 \end{lem}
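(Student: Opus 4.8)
The plan is to prove Lemma 4.3 (the Marcinkiewicz--Zygmund type extension theorem of Herz and Rivi\`ere) by first reducing to the case of finite index sets in $H$ and then using an iteration/limiting argument. First I would recall the classical scalar Marcinkiewicz--Zygmund inequality: if $T\colon L^p(G)\to L^p(G)$ is bounded with norm $\|T\|$, then for any finite collection $f_1,\dots,f_N\in L^p(G)$ and any exponent $q$ lying between $p$ and $2$ (i.e. $p\le q\le 2$ or $p\ge q\ge 2$), one has
\begin{equation*}
\Big\| \Big(\sum_{k=1}^N |Tf_k|^q\Big)^{1/q}\Big\|_{L^p(G)} \le \|T\|\, \Big\| \Big(\sum_{k=1}^N |f_k|^q\Big)^{1/q}\Big\|_{L^p(G)}.
\end{equation*}
The standard route to this is: the case $q=2$ follows by randomization (averaging $T(\sum \epsilon_k f_k)$ over signs $\epsilon_k$, or over Gaussians) together with Khintchine's inequality; the case $q=p$ is trivial since $\big\|(\sum|f_k|^p)^{1/p}\big\|_{L^p}^p = \sum\|f_k\|_{L^p}^p$; and the intermediate exponents follow by complex interpolation of the vector-valued operator $T\otimes \mathrm{Id}$ between $L^p(G;\ell^p)$ and $L^p(G;\ell^2)$, using that $[\ell^p,\ell^2]_\theta = \ell^q$ for the appropriate $\theta$.

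Next I would promote this from finite sequences to general $\sigma$-finite $(H,\nu)$. Given $g\in L^p(G;L^q(H))$, I would approximate $g$ by simple functions in the $H$-variable: functions of the form $g_n(x,\xi) = \sum_{k=1}^{N_n} f_k^{(n)}(x)\,\mathbf 1_{E_k^{(n)}}(\xi)$ with the $E_k^{(n)}$ disjoint measurable subsets of $H$ of finite measure. On such $g_n$ the desired operator is forced to be $(\tilde T g_n)(x,\xi) = \sum_k (Tf_k^{(n)})(x)\,\mathbf 1_{E_k^{(n)}}(\xi)$, which is well-defined and, by the finite-sequence inequality applied with weights $\nu(E_k^{(n)})^{1/q}$, satisfies $\|\tilde T g_n\|_{L^p(G;L^q(H))}\le \|T\|\,\|g_n\|_{L^p(G;L^q(H))}$. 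Since simple functions are dense in $L^p(G;L^q(H))$ and $\tilde T$ is a contraction-scaled bounded map on this dense subspace, it extends uniquely to a bounded linear operator on all of $L^p(G;L^q(H))$ with $\|\tilde T\|\le\|T\|$. Finally, for $g$ of the special product form $g(x,\xi)=f(\xi)u(x)$ — note the lemma uses the variable names in the opposite order, so I would keep $g(x,\xi)=u(x)f(\xi)$ with $u\in L^q(H)$ and $f\in L^p(G)$ — one checks directly on simple approximations that $(\tilde T g)(\xi,x) = (Tf)(\xi)u(x)$, which passes to the limit by continuity.

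The main obstacle is the case of intermediate $q$: one must be careful that the vector-valued extension $T\otimes\mathrm{Id}_{\ell^2}$ really is bounded on $L^p(G;\ell^2)$ for a merely bounded (not necessarily positive or integral) operator $T$, which is exactly where the sign-randomization argument plus Khintchine's inequality is essential — this is the content that makes the lemma nontrivial and is the reason the hypothesis restricts $q$ to lie between $p$ and $2$. A secondary technical point is ensuring the extension $\tilde T$ is genuinely well-defined (independent of the choice of simple-function decomposition), which follows from linearity of $T$ together with the a priori bound on simple functions. Since all of this is standard and recorded in Herz--Rivi\`ere \cite{HR}, I would present the proof compactly, emphasizing the randomization step and the interpolation, and merely sketch the density argument.
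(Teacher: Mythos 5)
Your outline is correct, but note that the paper itself offers no proof of this lemma: it is quoted verbatim from Herz--Rivi\`ere \cite{HR}, so there is no internal argument to compare against, and what you have written essentially reconstructs the standard proof that the paper leaves to the reference. Your three-step scheme is sound: (i) the finite-sequence Marcinkiewicz--Zygmund inequality at the endpoints $q=p$ (Fubini, trivially with constant $\|T\|$) and $q=2$; (ii) complex interpolation of $T\otimes\mathrm{Id}$ on $L^p(G;\ell^q_N)$, using $[\ell^p_N,\ell^2_N]_\theta=\ell^q_N$, to cover $q$ between $p$ and $2$; (iii) passage from finite sequences to general $L^q(H)$ by simple functions $\sum_k f_k(x)\mathbf 1_{E_k}(\xi)$ with disjoint $E_k$, density in the Bochner space, and a direct check of the tensor identity $(\tilde Tg)=(Tf)\,u$ (the reversed variable order in the lemma's display is just a slip in the paper's statement). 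One caveat: to obtain the sharp bound $\|\tilde T\|\le\|T\|$ asserted in the lemma, the $q=2$ step must be done with Gaussian randomization (where $\mathbb{E}\,|\sum_k g_k a_k|^p = c_p^p(\sum_k|a_k|^2)^{p/2}$ exactly), not with Rademacher signs plus Khintchine, which only yields $C_p\|T\|$; you mention both options, so make the Gaussian choice explicit if you want the stated constant --- though for the application in Section 4 any constant depending on $p$ would do. A second minor point is that Calder\'on's identification $[L^p(G;A_0),L^p(G;A_1)]_\theta=L^p(G;[A_0,A_1]_\theta)$ is usually stated for $\sigma$-finite $G$; since every $L^p$ function is supported on a $\sigma$-finite set (and your interpolation only involves finite-dimensional fibers $\ell^q_N$), this costs nothing, but it is worth a sentence given that the lemma allows arbitrary measure spaces.
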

 The idea of Rubio de Francia is as follows (we are indebted to
 Gustavo Garrigos for bringing this to our attention). Suppose $
 T: L^p(\R^d, dx) \rightarrow L^p(\R^d, dx) $ is a bounded
 linear operator. Then by the lemma of Herz and Riviere, it has an extension $
 \tilde{T} $ to $ \mathcal{H} $ valued functions on $\R^d$ where
 $ \mathcal{H} $ is the Hilbert space $ L^2(K)$, $K=SO(d)$.
 Moreover, the extension satisfies $(\tilde{T}\tilde{f})(x,k) =
 Tg(x)h(k)$ if $\tilde{f}(x,k)=g(x)h(k) $, $x \in \R^d $, $k\in SO(d) $.
 Given $f \in L^p(\R^d, dx) $ consider $\tilde{f}(x, k) = f(kx)
 $. Then $\int _{\R^d}(\int _K | \tilde{f}(x,k)|^2 dk )^{\frac{p}{2}}dx $
 can be calculated as follows.
 If $x=r\om $ , $\om \in \sd $, $\tilde{f}(x,k)=f(rk\om) $ and
 hence
 \begin{eqnarray}
  \int _K |\tilde{f}(x,k)|^2dk =
  \int_{K_{\om}}\Big (\int_{K/K_{\om}}|f(r k \om )|^2 d \mu \Big )d\nu
 \end{eqnarray}
 where $K_{\om}=\{ k\in K: k \om = \om \} $ is the isotropy
 subgroup of $ K $, $ d\nu $ is the Haar
 measure on $K_{\om} $ and $d\mu $ is the $K_{\om} $ invariant
 measure on $K/K_{\om} $ which
 can be identified with $\sd $. Hence
 \begin{eqnarray}
   \int _K |\tilde{f}(x,k)|^2 dk = c\int _{\sd} |f(r\om)|^2d\sigma
   (\om).
 \end{eqnarray}
 Therefore,
 \begin{eqnarray}\label{Y}
  & & \int_{\R^d} \Big ( \int _K |\tilde{f}(x,k)|^2 dk\Big )^{\frac{p}{2}}dx
   =  c' \int _0 ^{\infty} \Big ( \int _{\sd} |f(r\om)|^2d\sigma
   (\om)\Big )^{\frac{p}{2}}r^{d-1}dr.
 \end{eqnarray}

 Let us define $\rho (k)f(x)=f(k x) $ so that $\tilde{f}(x,k)=\rho (k) f(x) $.
 If $ T $ commutes with rotation
 i.e. $T\rho (k)= \rho (k)T $ then
 $$\tilde{T} \tilde{f}(x,k) = T(\rho (k)f)(x) = \rho (k) (Tf)(x) = (Tf)(k x).$$
 The boundedness of $ \tilde{T} $ on $ L^p(\R^d, \mathcal{H}) $ gives
 \begin{eqnarray}
  & & \int_{\R^d} \Big ( \int _K | T f(k x)|^2 dk \Big )^{\frac{p}{2}}dx
  \leq  C \int_{\R^d} \Big ( \int _K | f(k
 x)|^2 dk \Big )^{\frac{p}{2}}dx
 \end{eqnarray}
 which translates into the mixed norm estimate for $ T $.\\

 Given a unit vector $u \in \sd $ let us consider the operator
 $T_uf = \sum_{j=1}^d u_j\\ R_jf(x)$ where
 $R_j = A_jH^{- \frac{1}{2}} $ are the Hermite Riesz transforms. This operator $T_u $ is not
 rotation invariant but has a nice transformation property under the
 action of $SO(d) $. Indeed,
 $$T_uf(x)= (x\cdot u + u \cdot \N )H^{-\frac{1}{2}}f(x)$$
 and as $H^{-\frac{1}{2}} $ commutes with $\rho (k) $ it follows that
 $$T_u\rho (k) f = \rho (k) T_{k u}f \; \;
 \textit{or} \; \; T_{k^{-1} u}\rho (k) f = \rho (k) T_{u}f .$$
 This leads us to
 $$ T_uf(k x)= \sum_{j=1}^d (k^{-1} u)_j R_j(\rho(k) f)(x).$$
 We make use of this in proving the mixed norm estimate \eqref{X} .\\

 The operator $ R_j$ are singular integral operators and hence
 bounded on $L^p(\R^d, wdx) $ for any weight function $w \in A_p(\R^d) $,
 $1<p<\infty $. By the lemma of
 Herz and Riviere, $R_j $ extends as a bounded operator $\widetilde{R}_j $  on
 $L^p(\R^d, \mathcal{H}; wdx)$  where $\mathcal{H}=L^2(\sd)$ and
 $\widetilde{R}_j(\rho (k)f)(x)= R_j(\rho (k)f)(x)$. When
 $ w $ is radial, it can be easily checked that
 \begin{eqnarray}\label{Z}
 \|\rho(k)f(x)\|^p_{L^p(\R^d, \mathcal{H};\: wdx)} &=& \int_{\R^d}\Big (
  \int _K |\rho(k)f(x)|^2dk\Big )^{\frac{p}{2}}w(x)dx \notag \\
 &=&  c \int_0 ^{\infty} \Big ( \int_{\sd}|f(r\om)|^2 d\sigma (\om)\Big
 )^{\frac{p}{2}}w(r)r^{d-1}dr.
 \end{eqnarray}
 Moreover, by the result of Duoandikoetxea et al (Theorem 3.2 in \cite{DMOS}),
 a radial weight $w$ belongs to $  A_p(\R^d) $ if and only if
 $w(r) \in A_p ^{\frac{d}{2}-1}(\R^+)  $.
 From the identity
 \begin{eqnarray*}
 T_uf(k x) &=& \sum _{j=1} ^{d} (k^{-1} u)_jR_j(\rho (k)f)(x)\\
           &=& \sum _{j=1} ^{d} (k^{-1} u)_j\widetilde{R}_j(\rho (k)f)(x)
 \end{eqnarray*}
 we obtain
 \begin{eqnarray*}
     \|T_uf(k x)\|_{L^p(\R^d, \mathcal{H};\; wdx)} & \leq & C \sum _{j=1} ^{d}
     \|\widetilde{R}_j(\rho (k)f)(x) \|_{L^p(\R^d, \mathcal{H};\;
     wdx)}\\
   & \leq & C  \sum _{j=1} ^{d}\|\rho (k)f(x)\|_{L^p(\R^d, \mathcal{H};\;
     wdx)}
 \end{eqnarray*}
 which translates into the required inequality \eqref{X} by
 \eqref{Z} and taking $ u $ to be coordinate vectors.

 \subsection{Higher order Riesz transforms:}
 In this section we show that Theorem 1.1 remains true for
 higher order Riesz transforms associated to the Hermite
 operator $H_d$. As explained in Sanjay-Thangavelu \cite{SST},
 operators of the form $R_Pf=G(P) H^{-(\frac{m+n}{2})} $
 where $P$ is a solid bigraded harmonic of total degree
 $(m+n)$ and $G(P)$ is the Weyl correspondence of $P$,
 are natural analogues of higher order Riesz transforms. When
 $$P(z)=\sum_{|\ap|=m, |\bt|=n}c_{\ap,\bt}z^{\ap}\bar{z}^{\bt}$$
 is a solid harmonic, Geller \cite{G} has shown that
 $$G(P)=\sum_{|\ap|=m, |\bt|=n}c_{\ap,\bt}A^{\ap}A^{*\bt} $$
 where $A=(A_1, \cdots, A_d) $, $A^*=(A_1^*, \cdots , A_d^*) $.
 In particular when $P(z)=z^{\ap} $ (resp. $\bar{z}^{\ap} $), $G(P)=A^{\ap}$
 (resp. $A^{*\ap} $). The Riesz transforms
 $G(P) H^{-(\frac{m+n}{2})} $ have been studied in \cite{SST}. There,
 by using a transference result of Mauceri it has been
 shown that $G(P) H^{-(\frac{m+n}{2})} $ are all bounded on
 $L^p(\R^d)$, $1<p<\infty $.\\

 However, we can also directly prove the boundedness of
 $R_P=G(P) H^{-(\frac{m+n}{2})} $. In fact,
 $$G(P) H^{-(\frac{m+n}{2})}=\frac{1}{\Gamma(\frac{m+n}{2})}
 \int_0^{\infty}G(P)e^{-tH}t^{\frac{m+n}{2}-1}dt $$
 and hence the kernel $K_P(x,y)$ of $R_P$ is given by
 $$ K_P(x,y)=\frac{1}{\Gamma(\frac{m+n}{2})}\int_0^{\infty}G(P)K_t(x,y)t^{\frac{m+n}{2}-1}dt $$
 where $K_t$ is the kernel of $ e^{-tH}$ which is
 explicitly known. Though it is tedious, it is not
 difficult to show that $K_P$ is a Calderon-Zygmund
 kernel (see Stempak-Torrea \cite{STR} for the case $m+n=1$).
 Hence the Riesz transforms $R_P$ are bounded on
 $L^p(\R^d, wdx)$, $1<p<\infty$, $w \in A_p(\R^d)$.
 Using this we can prove
 \begin{thm} Let $P$ be a solid harmonic of bidegree
 $(m,n)$, $1<p<\infty$ and $ w\in A_p^{\frac{d}{2}-1}(\R^+)$.
 Then there exists $C>0$ such that
 $$\int_0^{\infty}\Big (\int_{\sd}|R_Pf(r\om)|^2d\si(\om)\Big )^{\frac{p}{2}}w(r)r^{d-1}dr $$
 $$\leq C \int_0^{\infty}\Big (\int_{\sd}|f(r\om)|^2d\si(\om)\Big )^{\frac{p}{2}}w(r)r^{d-1}dr $$
 for all $f\in L^{p,2}(\R^d, w(r)r^{d-1}drd\si(\om)) $.
 \end{thm}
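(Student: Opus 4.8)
The plan is to prove Theorem 4.4 in exactly the same way Theorem 4.1 was established in Subsection 4.2, via the idea of Rubio de Francia combined with the Herz--Riviere lemma (Lemma 4.3). The single operator $R_P$ is not rotation invariant, but the family $\{R_{P'}\}$, as $P'$ ranges over the finite dimensional space $\mathcal P_{m,n}$ of solid harmonics of bidegree $(m,n)$, is covariant under $SO(d)$, which is precisely the structure that was exploited for the vector $(R_1,\dots,R_d)$ of first order Hermite Riesz transforms.

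First I would record the covariance. Writing $\rho(k)f(x)=f(kx)$ for $k\in SO(d)$, the operators $A=(A_1,\dots,A_d)$ and $A^*=(A_1^*,\dots,A_d^*)$ transform as vectors, $\rho(k)^{-1}A_j\rho(k)=\sum_l k_{jl}A_l$ and likewise for $A_j^*$; since $H$, and hence $H^{-(m+n)/2}$, commutes with $\rho(k)$, Geller's formula $G(P)=\sum_{|\ap|=m,\,|\bt|=n}c_{\ap,\bt}A^{\ap}A^{*\bt}$ gives $\rho(k)^{-1}R_P\rho(k)=R_{k\cdot P}$, where $k\cdot P$ denotes the natural action of $SO(d)$ on polynomials. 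Because the complex Laplacian $\sum_j\partial_{z_j}\partial_{\bar z_j}$ and the bidegree are $SO(d)$ invariant, $k\cdot P$ is again a solid harmonic of bidegree $(m,n)$, so $\mathcal P_{m,n}$ is $SO(d)$ invariant; equivalently $R_Pf(kx)=R_{k^{-1}\cdot P}(\rho(k)f)(x)$. Fixing a basis $P_1,\dots,P_N$ of $\mathcal P_{m,n}$ and writing $k^{-1}\cdot P=\sum_{i=1}^N\lambda_i(k)P_i$ with $\lambda_i$ continuous on the compact group $SO(d)$, hence bounded, linearity of $P\mapsto R_P$ yields $R_Pf(kx)=\sum_{i=1}^N\lambda_i(k)\,R_{P_i}(\rho(k)f)(x)$.

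Next, given $w\in A_p^{\frac{d}{2}-1}(\R^+)$ I would set $W(x)=w(|x|)$, so that $W\in A_p(\R^d)$ by Theorem 3.2 of \cite{DMOS}. Each $R_{P_i}$, being a Calder\'on--Zygmund operator, is bounded on $L^p(\R^d,W\,dx)$, and by Lemma 4.3 it extends to a bounded operator $\widetilde R_{P_i}$ on $L^p(\R^d,\Hc;W\,dx)$ with $\Hc=L^2(SO(d))$, with $\|\widetilde R_{P_i}\|\le\|R_{P_i}\|$ and $\widetilde R_{P_i}(\rho(k)f)(x)=R_{P_i}(\rho(k)f)(x)$. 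For $f$ in the algebraic span of Hermite functions, put $\tilde f(x,k)=\rho(k)f(x)$; then, using the covariance identity and the boundedness of the $\lambda_i$ and the $\widetilde R_{P_i}$,
$$\big\|R_Pf(k\,\cdot\,)\big\|_{L^p(\R^d,\Hc;W\,dx)}\ \le\ \sum_{i=1}^N\|\lambda_i\|_\infty\,\big\|\widetilde R_{P_i}\tilde f\big\|_{L^p(\R^d,\Hc;W\,dx)}\ \le\ C\,\|\tilde f\|_{L^p(\R^d,\Hc;W\,dx)}.$$
Since $W$ is radial, the polar coordinate computation of Subsection 4.2 (integrating $|\tilde f(x,k)|^2$ and $|R_Pf(kx)|^2$ over $SO(d)$ via the isotropy subgroup, then passing to polar coordinates) identifies both sides with the mixed norms $\big(\int_0^\infty(\int_{\sd}|R_Pf(r\om)|^2d\si(\om))^{p/2}w(r)r^{d-1}dr\big)^{1/p}$ and $\big(\int_0^\infty(\int_{\sd}|f(r\om)|^2d\si(\om))^{p/2}w(r)r^{d-1}dr\big)^{1/p}$; this is the asserted estimate, and a density argument as before extends it to all $f$ in $L^{p,2}(\R^d,w(r)r^{d-1}dr\,d\si(\om))$.

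The hard part will be the first step: verifying that the Weyl correspondence $P\mapsto G(P)$ intertwines the natural $SO(d)$ action on $\mathcal P_{m,n}$ with conjugation by $\rho(k)$, so that $\rho(k)^{-1}R_P\rho(k)=R_{k\cdot P}$ with $k\cdot P$ once more a solid harmonic of bidegree $(m,n)$. Everything after that is a routine transcription of the proof of Theorem 4.1; in particular, since only the finitely many fixed operators $R_{P_1},\dots,R_{P_N}$ enter, no uniformity of the Calder\'on--Zygmund constants in $P$ is needed.
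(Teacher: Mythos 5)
Your proposal is correct, and the overall mechanism is the same as the paper's: transfer the weighted $L^p(\R^d,Wdx)$ boundedness of finitely many fixed higher order Riesz transforms to the mixed norm estimate via the Herz--Riviere lemma and the Rubio de Francia averaging over the rotation group, exactly as in the proof of Theorem 4.1. Where you genuinely diverge is in how the equivariance of the family $\{R_{P}\}$ is obtained. The paper packages the transforms into the $\Hc_{m,n}$-valued operator $Tf(z,\zeta)=\sum_j R_{P_j}f(z)Y_j(\zeta)$ and invokes Lemma 4.5, imported from \cite{SST}, which is phrased in terms of the action of $U(d)$ on $\C^d$ (and hence on $\std$); you instead stay entirely on $\R^d$ with $k\in SO(d)$, derive $\rho(k)^{-1}R_P\rho(k)=R_{k\cdot P}$ directly from Geller's formula $G(P)=\sum c_{\ap,\bt}A^{\ap}A^{*\bt}$ together with the vector transformation law $\rho(k)^{-1}A_j\rho(k)=\sum_l k_{lj}A_l$ (same for $A_j^*$, and no reordering issue arises since conjugation keeps all $A$'s to the left of all $A^*$'s), note that real orthogonal matrices act complex-linearly and preserve both the bidegree and the kernel of $\sum_j\partial_{z_j}\partial_{\bar z_j}$, and then expand $k^{-1}\cdot P=\sum_i\lambda_i(k)P_i$ in a fixed basis with $\lambda_i$ bounded. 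What your route buys is self-containedness and consistency with the statement of the theorem, which is formulated over $\sd$ with the measure $w(r)r^{d-1}dr$: the paper's sketch follows the Heisenberg/special Hermite setting of \cite{SST} and works with $\C^d$, $U(d)$ and $\std$, so it implicitly needs the restriction of that covariance to real rotations to match the mixed norm in the statement, a translation your argument performs explicitly. What the paper's formulation buys is that the key transformation property comes for free from the already proved Lemma 4.5 and the irreducible representation of $U(d)$ on $\Hc_{m,n}$, rather than having to be verified by hand; your verification, which you rightly single out as the main point to check, is indeed the only part of your write-up that goes beyond a routine transcription of Subsection 4.2, and the sketch you give of it is sound.
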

 The proof is similar to that of Theorem 4.1.
 Consider $\Hc_{m,n}$ the space of all bigraded
 spherical harmonics of bidegree $(m,n)$. If
 $Y \in \Hc_{m,n}$ then  $P(z)= |z|^{m+n}Y(z')$,
 $z=|z|z'$ is a solid harmonic. The group $U(d)$
 acts on $\Hc_{m,n}$ and we have an irreducible
 unitary representation, denoted by $R(\si)$
 supported by $\Hc_{m,n}$. We choose an orthonormal
 basis $Y_j$, $j=1,2, \ldots , d(m,n)$ and let
 $P_j$ stand for the corresponding solid harmonics.
 Consider the operator $T$ which takes $L^p(\C^d)$
 into $L^p(\C^d, \Hc_{m,n})$ given by the prescription
 $$Tf(z, \zeta)=\sum_{j=1}^{d(m,n)}R_{P_j}f(z)Y_j(\zeta). $$
 This operator has a very nice transformation  property. Let
 $\rho (\si)f(z) = f(\si ^{-1}z)$ stand
 for the action of $U(d)$ on functions on $\C^d$.
 \begin{lem}
 For any $\si \in U(d)$ we have
 $$Tf(z, \si^{-1}\zeta) = \sum_{j=1}^{d(m,n)} \rho (\si)R_{P_j}\rho(\si^{-1})f(z)Y_j(\zeta) .$$
 \end{lem}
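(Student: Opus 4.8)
The plan is to verify the transformation law for the operator $T$ directly from the definition together with the covariance of the Weyl functional calculus under $U(d)$. First I would recall that the Weyl correspondence $P \mapsto G(P)$ intertwines the action of $U(d)$ on solid harmonics with conjugation by the metaplectic-type operators $\rho(\si)$; concretely, for $\si \in U(d)$ one has $\rho(\si)\,G(P)\,\rho(\si^{-1}) = G(P\circ\si^{-1})$, equivalently $\rho(\si)\,G(P\circ\si)\,\rho(\si^{-1}) = G(P)$. Since $H$ commutes with each $\rho(\si)$ (the Hermite operator is $U(d)$-invariant), the same conjugation identity passes to $R_P = G(P)H^{-(m+n)/2}$, giving $\rho(\si)R_{P\circ\si}\rho(\si^{-1}) = R_P$, i.e. $\rho(\si)R_{P_j}\rho(\si^{-1})$ is the Riesz transform attached to the solid harmonic $P_j\circ\si$.

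Next I would use that the $P_j$ arise from an orthonormal basis $Y_j$ of $\Hc_{m,n}$ carrying the unitary representation $R(\si)$ of $U(d)$: writing $R(\si)Y_j = \sum_k R_{kj}(\si) Y_k$ for the unitary matrix $(R_{kj}(\si))$, the corresponding solid harmonics satisfy $P_j\circ\si = \sum_k R_{kj}(\si^{-1}) P_k$ (the reindexing and the appearance of $\si^{-1}$ come from whether $U(d)$ acts on functions by $f\mapsto f\circ\si^{-1}$, matching the convention $\rho(\si)f(z)=f(\si^{-1}z)$ fixed just before the lemma). Then
\begin{eqnarray*}
\sum_{j=1}^{d(m,n)} \rho(\si)R_{P_j}\rho(\si^{-1})f(z)\,Y_j(\zeta)
&=& \sum_{j=1}^{d(m,n)} R_{P_j\circ\si}f(z)\,Y_j(\zeta) \\
&=& \sum_{j,k} R_{kj}(\si^{-1})\,R_{P_k}f(z)\,Y_j(\zeta) \\
&=& \sum_{k=1}^{d(m,n)} R_{P_k}f(z)\sum_j R_{kj}(\si^{-1})Y_j(\zeta).
\end{eqnarray*}
The inner sum is exactly $(R(\si^{-1})Y_k)(\zeta) = Y_k(\si\zeta)$; hmm — I must be careful here, because what appears on the left-hand side of the lemma is $Y_j$ evaluated at $\si^{-1}\zeta$, so I would instead organize the matrix bookkeeping so that $\sum_j R_{kj}(\si^{-1})Y_j(\zeta) = Y_k(\si^{-1}\zeta)$, which recovers $\sum_k R_{P_k}f(z)Y_k(\si^{-1}\zeta) = Tf(z,\si^{-1}\zeta)$. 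In other words, once the unitarity of $R(\si)$ is invoked, the two changes of variable (one hidden in $P_j\circ\si$, one in $Y_j(\si^{-1}\zeta)$) cancel, leaving precisely $Tf(z,\si^{-1}\zeta)$.

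The step I expect to be the main obstacle is pinning down the covariance identity $\rho(\si)G(P)\rho(\si^{-1}) = G(P\circ\si^{-1})$ with the correct convention and the correct side for $\si$ versus $\si^{-1}$; this is the place where Geller's formula $G(P) = \sum c_{\ap,\bt} A^\ap A^{*\bt}$ enters, since one needs to know how the annihilation/creation vectors $A = (A_1,\dots,A_d)$ transform, namely $\rho(\si) A_j \rho(\si^{-1}) = \sum_k \overline{\si_{jk}} A_k$ (and the conjugate relation for $A^*$), so that a monomial $z^\ap\bar z^\bt$ maps under $\si$ exactly as the corresponding operator monomial maps under conjugation. I would state this as a short lemma (or cite \cite{G} and \cite{SST}), check it on the generators $A_j, A_j^*$, and then extend by linearity/multiplicativity to all solid harmonics $P$. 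With that in hand, the proof of the lemma is the three-line computation above, and the only remaining care is consistent index/conjugation conventions throughout.
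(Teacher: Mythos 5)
The paper itself offers no proof of this lemma beyond the remark that it ``has been essentially proved in \cite{SST}, see the proof of Theorem 1.4,'' and the route you propose --- covariance of the Weyl correspondence $G(P)$ under conjugation by $\rho(\si)$, the $U(d)$-invariance of the operator (so that the identity passes from $G(P)$ to $R_P=G(P)H^{-(m+n)/2}$), and then the unitary action $R(\si)$ on $\Hc_{m,n}$ to re-sum the basis --- is exactly the argument being delegated to \cite{SST}. So the strategy is the intended one, and the three-line computation is the right shape of proof.

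The genuine gap is the one you half-acknowledge: the direction of the covariance identity, and it is not a matter of ``organizing the matrix bookkeeping'' afterwards. Your opening paragraph asserts $\rho(\si)G(P)\rho(\si^{-1})=G(P\circ\si^{-1})$, while your displayed computation uses $\rho(\si)R_{P_j}\rho(\si^{-1})=R_{P_j\circ\si}$; these are incompatible, and the subsequent bookkeeping is forced, not free. Indeed (taking real coefficient matrices for simplicity), if $P_j\circ\si^{-1}=\sum_k\langle Y_j\circ\si^{-1},Y_k\rangle P_k$, then invariance of $d\si$ gives $\sum_j\langle Y_j\circ\si^{-1},Y_k\rangle Y_j(\zeta)=Y_k(\si\zeta)$, so with the covariance as you first state it the computation lands on $Tf(z,\si\zeta)$, not $Tf(z,\si^{-1}\zeta)$; only the opposite convention, the one your display silently uses, produces the stated identity. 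Thus the crux of the proof is precisely the step you defer: deriving, from the paper's conventions $\rho(\si)f(z)=f(\si^{-1}z)$ and Geller's formula $G(P)=\sum c_{\ap,\bt}A^{\ap}A^{*\bt}$, how the vectors $A$ and $A^{*}$ actually conjugate under $\rho(\si)$, which pins down whether one gets $P\circ\si$ or $P\circ\si^{-1}$. In addition, since the $Y_j$ are complex-valued bigraded harmonics, the matrix coefficients $\langle Y_j\circ\si,Y_k\rangle$ enter with complex conjugations (unitarity gives the conjugate transpose, not the transpose), and $P\mapsto R_P$ is complex-linear, so these conjugations must be tracked explicitly rather than waved through. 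None of this changes your approach --- it is the computation in the proof of Theorem 1.4 of \cite{SST} --- but as written the argument does not yet establish the identity, and with the covariance taken literally as you state it, it would establish the wrong one.
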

 This lemma has been essentially proved in \cite{SST},
 see the proof of Theorem 1.4. Once we have the above Lemma
 we can easily prove Theorem 4.4. Indeed, from the lemma we have
 $$Tf(\si z, \zeta) = \sum_{j=1}^{d(m,n)}  R_{P_j}\rho(\si^{-1})f(z)Y_j(\si \zeta) .$$
 With the same notation as in the proof of Theorem 4.1,
 the above reads as
 $$\rho (\si ^{-1})Tf(z, \zeta)=\sum_{j=1}^{d(m,n)}\tilde{R}_{P_j}\tilde{f}(z, \si^{-1})Y_j(\si \zeta). $$
 where we keep $\zeta \in \std $ fixed. Then by
 similar calculations, using the Lemma
 of Herz-Riviere we can obtain the desired
 inequality for $T(\cdot , \zeta)$ and
 hence for any $R_{P_j}f$. This completes the proof of Theorem 4.4.\\ \\

\begin{center}
{\bf Acknowledgments}
\end{center}

We are very thankful to G. Garrigos for pointing out the method of
Rubio de Francia. We are also very thankful to the referee whose persistent demand for details has greatly improved the exposition. The first author is thankful to CSIR, India, for
the financial support. The work of the second author is supported
by J. C. Bose Fellowship from the Department of Science and
Technology (DST) and also by a grant from UGC via DSA-SAP.


\end{document}